\begin{document}
\setlength{\baselineskip}{16pt}

\parindent 0.5cm
\evensidemargin 0cm \oddsidemargin 0cm \topmargin 0cm \textheight
22cm \textwidth 16cm \footskip 2cm \headsep 0cm

\newtheorem{theorem}{Theorem}[section]
\newtheorem{lemma}{Lemma}[section]
\newtheorem{proposition}{Proposition}[section]
\newtheorem{definition}{Definition}[section]
\newtheorem{example}{Example}[section]
\newtheorem{corollary}{Corollary}[section]

\newtheorem{remark}{Remark}[section]

\numberwithin{equation}{section}

\def\p{\partial}
\def\I{\textit}
\def\R{\mathbb R}
\def\C{\mathbb C}
\def\u{\underline}
\def\l{\lambda}
\def\a{\alpha}
\def\O{\Omega}
\def\e{\epsilon}
\def\ls{\lambda^*}
\def\D{\displaystyle}
\def\wyx{ \frac{w(y,t)}{w(x,t)}}
\def\imp{\Rightarrow}
\def\tE{\tilde E}
\def\tX{\tilde X}
\def\tH{\tilde H}
\def\tu{\tilde u}
\def\d{\mathcal D}
\def\aa{\mathcal A}
\def\DH{\mathcal D(\tH)}
\def\bE{\bar E}
\def\bH{\bar H}
\def\M{\mathcal M}
\renewcommand{\labelenumi}{(\arabic{enumi})}

\def\disp{\displaystyle}
\def\undertex#1{$\underline{\hbox{#1}}$}
\def\card{\mathop{\hbox{card}}}
\def\sgn{\mathop{\hbox{sgn}}}
\def\exp{\mathop{\hbox{exp}}}
\def\OFP{(\Omega,{\cal F},\PP)}
\newcommand\JM{Mierczy\'nski}
\newcommand\RR{\ensuremath{\mathbb{R}}}
\newcommand\CC{\ensuremath{\mathbb{C}}}
\newcommand\QQ{\ensuremath{\mathbb{Q}}}
\newcommand\ZZ{\ensuremath{\mathbb{Z}}}
\newcommand\NN{\ensuremath{\mathbb{N}}}
\newcommand\PP{\ensuremath{\mathbb{P}}}
\newcommand\abs[1]{\ensuremath{\lvert#1\rvert}}

\newcommand\normf[1]{\ensuremath{\lVert#1\rVert_{f}}}
\newcommand\normfRb[1]{\ensuremath{\lVert#1\rVert_{f,R_b}}}
\newcommand\normfRbone[1]{\ensuremath{\lVert#1\rVert_{f, R_{b_1}}}}
\newcommand\normfRbtwo[1]{\ensuremath{\lVert#1\rVert_{f,R_{b_2}}}}
\newcommand\normtwo[1]{\ensuremath{\lVert#1\rVert_{2}}}
\newcommand\norminfty[1]{\ensuremath{\lVert#1\rVert_{\infty}}}

\title{Semi-Wave Solutions of KPP-Fisher Equations with Free Boundaries in Spatially Almost Periodic Media. }

\author{Xing Liang \thanks{This work was supported by the the National Natural Science Foundation of China (11571334) and the Fundamental Research Funds for the Central Universities. }\\ School of Mathematical Sciences\\ University of Science and Technology of China
}
\date{}
\maketitle

\abstract
{In this paper we prove the existence and uniqueness 
 of the almost periodic semi-wave (traveling wave) solutions of KPP-Fisher equations with free boundaries in spatially almost periodic media.}

\noindent \textbf{Keywords.}
Almost periodic media; KPP equation; Free boundary; Traveling waves\smallskip

\noindent \textbf{2010 Mathematics Subject Classification. 35K57; 	35B15; 35R35. }

\date{}
\maketitle

\section{Introduction}

After the pioneer works of  \cite{Fisher, KPP} about traveling waves, and that  of \cite{AroWei2} about spreading speeds,  the propagation problems of reaction-diffusion equations in unbounded domain become one important branch of parabolic equations.
Specially,  traveling waves and spreading speeds of KPP-Fisher type and more general reaction-diffusion equations in heterogeneous media obtained more and more attentions of mathematicians in the last few decades.

As one simplest heterogenous case, the propagation problems in (spatially) periodic media were considered by mathematicians widely. Applying the approach of probability, \cite{GF1979} first proved the existence of  spreading speeds for one-dimensional KPP-Fisher equations in periodic media. 
\cite{SKT1986, xin1} gave the definition of the spatially periodic traveling waves independently, and then \cite{HZ95} proved the existence of the spatially periodic traveling waves of KPP-Fisher equations in the distributional sense.  
In the series of works (e.g.\cite{BH,BHR,BHN}), Berestycki, Hamel and their colleagues investigated the traveling waves and spreading speeds of KPP-Fisher type equations in high-dimensional periodic media deeply. Besides above works, a more general framework is given by \cite{LiZh,Wein2002} to study these two concepts for KPP-Fisher type equations and more general diffusion systems.

However, there are only a few works on the traveling waves and spreading speeds of KPP-Fisher equations in more complicated media. Applying  the theory of generalized principal eigenvalues, \cite{BN} proved the existence of the spreading speeds of KPP-Fisher equations in almost periodic or random stationary ergodic media. It is important to note that Matano \cite{matano} gave the definition of  spatially almost periodic traveling waves and provided some sufficient conditions on the existence of the spatially almost periodic traveling waves solution of  reaction-diffusion equations with bistable nonlinearity. We also point out that the propagation problems of  (temporally) nonautonomous reaction-diffusion equations were studied by Shen in many works(e.g. \cite{She1, She2, She3}). However, as we know, until now, there is no work on the traveling wave solutions of KPP-Fisher equations in almost periodic media.

Very recently, Du and Lin \cite{DuLi} first studied the propagation problems of reaction-diffusion equations with free boundaries. Though  a lot of works in this field appeared after \cite{DuLi}, most of them only considered the homogeneous media. The traveling waves and spreading speeds of reaction-diffusion equations with free boundaries in periodic media were only considered in \cite{DuLia, zh}. Once again,  no work considered the propagation problems in more complicated media.

The main aim of this paper is to prove the existence and uniqueness 
of the semi-wave solutions of the following diffusive KPP equation with a free boundary,
\begin{equation}
\label{main-eq}
\begin{cases}
u_t=u_{xx}+u(a(x)-u),\quad -\infty<x<h(t)\cr u(h(t),t)=0,h^{'}(t)=-\mu u_x(h(t),t)
\end{cases}
\end{equation}
where $\mu>0$, 
 $a(x)$ is a positive alomost periodic function in $x\in\RR$.
Here, as in previous works, we use the concept ``semi-wave" to replace "traveling wave" since the profile function of the wave is only defined on the half real line $(-\infty,0]$.

We introduce some concepts and basic results which will be used in this paper. Let $H(a)$ be the closure of $\{a(\cdot+t)|t\in \RR\}$ with the uniform topology.  
Given one metric space $\mathbb B$, we say one subset $\{\phi_g|g\in H(a)\}\subset \mathbb B$ is a {\bf one-cover} of $H(a)$ in $\mathbb B$ provided the mapping $F:g\to v_g$ is continuous.

We need consider not only the original equation \eqref{main-eq}, but also
 the following equation for any $g\in H(a)$.
\begin{equation}
\label{main-eq-g}
\begin{cases}
u_t=u_{xx}+u(g(x)-u),\quad -\infty<x<h(t)\cr u(h(t),t)=0,h^{'}(t)=-\mu u_x(h(t),t)
\end{cases}
\end{equation} 

As usual, we first consider the steady states and dynamics of the respective equation on $\RR$ \begin{equation}
\label{equationR}
u_t=u_{xx}+u(g(x)-u),\quad x\in \RR, g\in H(a),
\end{equation} 

 \begin{proposition}
 \label{positivesteadystate}For any $g\in H(a)$, \eqref{equationR} has a unique bounded positive steady state $u_g^*$,  and $u^*_{ g(\cdot+s) }(x)=u^*_g(x+s)$ for any $s\in \RR$. Moreover,  define one mapping $F:H(a)\to L^\infty(\RR)\cap C(\RR)$ by $F(g) = u^*_g$. Then $F$ is continuous, that is, $\{u^*_g|g\in H(a)\}$ is a one-cover of $H(a)$ in $L^\infty(\RR)\cap C(\RR)$.
Specially, $u_g^*$ is one almost periodic function.
 
 For any continuous initial data $u_0$ , if there are two positive numbers $m<M$ such that 
 $0<m<u_0(x)<M$ for any $x\in \RR$,  then the solution of the Cauchy problem of \eqref{equationR} converges to $ u^*_g$ as $t\to +\infty$ uniformly for $x\in \RR$ .  \end{proposition}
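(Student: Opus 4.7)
\emph{Existence.} Set $m_0 := \inf_{\RR} g > 0$ and $M_0 := \sup_{\RR} g < \infty$; because $a$ is positive almost periodic, these bounds hold uniformly in $g \in H(a)$. The constants $m_0$ and $M_0$ are respectively sub- and supersolutions of the stationary problem $u_{xx}+u(g-u)=0$. Let $\underline u(x,t)$ and $\overline u(x,t)$ denote the solutions of \eqref{equationR} starting from the constants $m_0$ and $M_0$; by the parabolic comparison principle they are monotone in $t$ (nondecreasing and nonincreasing, respectively) and remain trapped in $[m_0, M_0]$. Parabolic Schauder estimates give $C^2_{\mathrm{loc}}$-convergence as $t \to +\infty$ to stationary solutions $\underline u_g \leq \overline u_g$ with $m_0 \leq \underline u_g \leq \overline u_g \leq M_0$.

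\emph{Uniqueness.} The key computation is that the ratio $w := \underline u_g/\overline u_g \in (0,1]$, obtained from $\underline u_g = w\,\overline u_g$ together with the steady equation, satisfies
\begin{equation*}
w'' + 2\,\frac{\overline u_g'}{\overline u_g}\,w' \;=\; w\,\overline u_g\,(w-1).
\end{equation*}
At a point where $w$ attains its infimum $\tau$, the left-hand side is nonnegative while the right-hand side equals $\tau\overline u_g(\tau-1)$, forcing $\tau \geq 1$. Non-attainment on $\RR$ is handled by the translation-compactness device that will recur throughout the proof: pick $x_n$ with $w(x_n) \to \inf w$, and extract via compactness of $H(a)$ (translates of $g$ stay in the compact hull) together with elliptic Schauder estimates a subsequence along which $g(\cdot+x_n) \to g^\infty \in H(a)$, $\underline u_g(\cdot+x_n) \to \underline u^\infty$, $\overline u_g(\cdot+x_n) \to \overline u^\infty$ in $C^2_{\mathrm{loc}}$; then $\underline u^\infty,\overline u^\infty$ are positive bounded steady states of the equation with coefficient $g^\infty$, and their ratio does attain its infimum at $0$, yielding $\inf w \geq 1$. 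Combined with $w \leq 1$ this forces $\underline u_g \equiv \overline u_g =: u_g^*$. For an arbitrary positive bounded steady state $u^*$, the same translation argument applied at $\sup u^*$ (the steady equation at a max forces $u^* \leq g$) and at $\inf u^*$ (a Dirichlet principal-eigenvalue / Sturm comparison rules out the degenerate case $\inf u^* = 0$, using $m_0 > 0$) yields $m_0 \leq u^* \leq M_0$; parabolic comparison with $\underline u(\cdot,t)$ and $\overline u(\cdot,t)$ then gives $u^* \equiv u_g^*$. The equivariance $u^*_{g(\cdot+s)}(x) = u^*_g(x+s)$ is automatic from uniqueness.

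\emph{Continuity and almost periodicity.} If $g_n \to g$ in $H(a)$ but $\|u_{g_n}^* - u_g^*\|_\infty \not\to 0$, choose $x_n$ with $|u_{g_n}^*(x_n) - u_g^*(x_n)| \geq \delta > 0$ and translate by $x_n$; compactness of $H(a)$ plus elliptic estimates produce two positive bounded steady states of a common limit equation (since $g_n(\cdot+x_n)-g(\cdot+x_n)\to 0$ uniformly on $\RR$) differing at $0$, contradicting the uniqueness just established. Hence $F$ is continuous. Almost periodicity of $u_g^*$ then follows from Bochner's criterion: the orbit $\{u_g^*(\cdot+s): s\in\RR\} = \{F(g(\cdot+s)): s\in\RR\} \subset F(H(a))$ is the continuous image of the compact hull $H(a)$, hence relatively compact in $L^\infty(\RR)\cap C(\RR)$.

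\emph{Cauchy convergence.} For $0 < m \leq u_0 \leq M$, sandwich the solution between the parabolic solutions starting from the constants $\min(m,m_0)$ and $\max(M,M_0)$; both converge pointwise to $u_g^*$. Uniform-in-$x$ convergence is upgraded by a final translation-compactness argument: any failing sequence $(x_n, t_n)$ with $t_n \to \infty$ produces, in the limit, a bounded entire solution of \eqref{equationR} for some $g^\infty \in H(a)$ squeezed between the time-translates of the monotone approximants for $g^\infty$ (themselves converging to $u_{g^\infty}^*$ by the preceding steps), forcing it to coincide with $u_{g^\infty}^*$ and contradicting the assumed gap. The main obstacle throughout is the absence of spatial compactness on $\RR$: every max-principle or monotonicity step has to be paired with the compactness-of-the-hull device, and the various extractions must be organized so that the same base-point subsequence and the same limit coefficient $g^\infty$ simultaneously govern all the objects being compared.
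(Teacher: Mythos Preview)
Your proposal is correct and follows essentially the same architecture as the paper: sub/supersolutions for existence, uniqueness of the bounded positive steady state, and then the translation-compactness device (extract a limit in the compact hull $H(a)$ and invoke uniqueness for the limit coefficient) to obtain the one-cover property in $L^\infty(\RR)$ and almost periodicity. The differences are minor. First, the paper does not prove uniqueness at all---it simply cites \cite{DLiu}---whereas you supply a direct ratio argument $w=\underline u_g/\overline u_g$; this is a genuine addition, and your handling of the non-attained infimum via translation is exactly the right move (the only point to tighten is ruling out $\inf u^*=0$ for an arbitrary positive bounded steady state, but your principal-eigenvalue sketch is the standard route and works). Second, for the stability step the paper makes the slicker choice of $s\,u^*_g$ as sub/supersolution (for $s<1$, respectively $s>1$), which immediately gives monotone parabolic convergence sandwiching any initial datum $m\le u_0\le M$; you use constant initial data instead and then need one more translation-compactness pass to upgrade pointwise convergence to uniform-in-$x$ convergence. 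Both work; the paper is in fact silent on why the convergence is uniform in $x$, so your extra pass is arguably filling a gap rather than taking a detour. Your continuity argument for $F$ is identical to the paper's.
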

Now we give the definition of the almost periodic semi-wave solution of \eqref{main-eq-g}:
\begin{definition}\label{traveling-wave}
Fix $g\in H(a)$. Let $(u(x,t;g),h(t;g))$ be one positive bounded entire solution of \eqref{main-eq-g}.  We call $(u(x,t;g),h(t;g))$ one almost periodic  semi-wave solution of \eqref{main-eq-g}, simply, semi-wave solution, provided $u(x,t;g)$ satisfies the following hypotheses: \begin{enumerate}\item 
There is some  $v=v(\xi,\tau;g)\in C^2((-\infty,0]\times \RR)$ such that $u(x,t;g)$ can be written as $u(x,t;g)=v(x-h(t;g),h(t;g);g)$, where $h(\pm\infty;g)=\pm \infty$. 
\item Denote $v_{g(\cdot+s)}(\xi):=v(\xi,s;g)$. Then $v_{g(\cdot+s_n)}(\cdot)$ converges in $L^\infty_{local}((-\infty,0])\cap C((-\infty,0])$  provided $g(\cdot+s_n)\to \bar g$ in $H(a)$.  
\item Using $v_{\bar g}$ to denote the limit of $v_{g(\cdot+s_n)}(\xi)$, $\{v_g|g\in H(a)\}$ is a one-cover of $H(a)$. \end{enumerate} \end{definition}
Specially, from this definition, $v=v(\xi,\tau;g) $ is one almost periodic function in $\tau$ from $\RR$ to $L^\infty_{local}((-\infty,0])\cap C((-\infty,0])$.

We can also give the following  equivalent definition of the almost periodic semi-wave to Definition \ref{traveling-wave}.
\begin{definition}\label{traveling-wave2}
Let $\{(u(x,t;g),h(t;g))|g\in H(a)\}$ be one family of functions. Suppose for each $g\in H(a)$ , $(u(x,t;g),h(t;g))$ is one positive bounded entire solution of \eqref{main-eq-g}.  We call $(u(x,t;g),h(t;g))$ one almost periodic  semi-wave solution of \eqref{main-eq-g} provided $u(x,t;g)$ satisfies the following hypotheses: \begin{enumerate}\item There is some  $v=v(\xi,\tau;g)\in C^2((-\infty,0]\times \RR)$ such that $u(x,t;g)$ can be written as $u(x,t;g)=v(x-h(t;g),h(t;g);g)$, where $h(\pm\infty;g)=\pm \infty$. 
\item
For any $s\in \RR$, $ v (\xi, \tau+s;g)= v (\xi, \tau;g(\cdot+s)), \xi\leq 0,\tau\in \RR $. 
\item $ \{v (\cdot,\cdot, ;g)\}$ is a one-cover of $H(a)$ in $L^\infty_{local}((-\infty,0]\times \RR)\cap C((-\infty,0]\times \RR)$. \end{enumerate} \end{definition}

\cite{matano} first gave the definition of (spatially) almost periodic solution. Roughly,  in \cite{matano} one entire solution $u$ of some reaction-diffusion equation is said to be one almost periodic traveling wave provided $u(x,t):=w(x-h(t),x)$ and $w$ is almost periodic in $x$. Such a definition is equivalent to ours since if $(u(x,t;g),h(t;g))$ is one semi-wave solution in our definition, then we also can rewrite $u(x,t):=w(x-h(t),x)$ for some function $w(\eta,x)$ and then the profile function $v$ in our definition satisfies  $v(\xi,\tau;g) =w(\xi, \xi+\tau)$. The almost periodic property of $v$ in $\tau$ is equivalent to that of $w$ in $x$.

Now, we show the main theorem of this paper.

\begin{theorem}\label{maintheorem}We have the following conclusions on the existence and uniqueness 
of the almost periodic semi-wave solution of \eqref{main-eq-g}:

\begin{enumerate}
\item \eqref{main-eq-g} has one semi-wave solution $(\tilde u,\tilde h)$. Furthermore,  $(\tilde u,\tilde h)$ has the following three properties: \begin{enumerate}
\item[(a)] Let $\tilde u(x,t)=v(x-\tilde h(t),\tilde h(t))$. Then $ v=v(\xi, \tau)$ is an almost periodic function in $\tau$ from $\RR$ to $L^\infty((-\infty,0])\cap C((-\infty,0])$. 
\item[(b)] $|\tilde u(x,t)- u^*_g(x)|\to 0$ as $x\to -\infty$ locally uniformly for $t\in\RR$. In this sense, we say that such a semi-wave solution connects $u^*$ and $0$. More precisely, 
 $|v(\xi, \tau)- u^*_g(\xi+\tau)|\to 0$ as $\xi\to -\infty$ uniformly for $\tau\in\RR$. 
 \item[(c)] $\tilde u(x,t)\to u^*_g(x)$ as $t\to +\infty$ uniformly for $x\in (-\infty,M]$ where $M$ can be any 
constant in $\RR$.
\end{enumerate}

\item The almost periodic semi-wave solution of \eqref{main-eq-g} connecting $u^*_g$ and $0$ is unique up to the time translation.
\item  Let $(u(x,t),h(t))$ be one positive entire solution of  \eqref{main-eq-g} with $u(x,t)\leq u_g^*(x)$ for any $x\in (-\infty,h(t)], t\in \RR$. If there are some $\epsilon,\delta>0$ such that $u(x,t)>\epsilon$ for any $x\in (-\infty,h(t)-\delta], t\in \RR$,  then $u(x,t)\equiv\tilde u(x,t+t_0)$ for some $t_0\in \RR$.
\end{enumerate}

\end{theorem}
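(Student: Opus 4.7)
The plan is to obtain all three conclusions as consequences of a single construction: build an entire solution by a compactness-limit procedure applied to a carefully chosen Cauchy problem for \eqref{main-eq-g}, then use strong comparison together with the KPP structure to extract the almost periodic dependence on $g\in H(a)$ and, in parallel, the uniqueness/Liouville statements.

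For part (1), I would start from the Cauchy problem of \eqref{main-eq-g} on an initial interval $(-\infty,h_0]$ with data $u_0$ pinched between $0$ and $u^*_g$ and decaying to $0$ at $h_0$. Standard Stefan-type theory gives a global solution with $h'(t)>0$ and $h(t)\to+\infty$. Two quantitative estimates drive everything: (i) a uniform upper bound $u(x,t)\le u^*_g(x)$ coming from Proposition~\ref{positivesteadystate} and comparison with the Cauchy problem on $\RR$; (ii) uniform bounds $0<c_0\le h'(t)\le C$, obtained by constructing explicit sub- and super-solutions of semi-wave type on compact $g$-orbit closures. With (i)--(ii) in hand, the time-shifted pairs $(u(\cdot,\cdot+t_n),h(\cdot+t_n)-h(t_n))$ form an equicontinuous family in $C^{2,1}_{\mathrm{loc}}$, and a diagonal extraction together with a simultaneous translation $g_n=g(\cdot+h(t_n))$ in $H(a)$ produces an entire solution $(\tilde u,\tilde h)$. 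Writing $\tilde u(x,t)=v(x-\tilde h(t),\tilde h(t))$, property (a) follows from the compactness of $H(a)$ and the one-cover property (once uniqueness is established), property (b) from comparison with the Cauchy problem on $\RR$ with data approximating $u^*_g$ far behind the front (applying Proposition~\ref{positivesteadystate} to the limit), and property (c) from an analogous comparison on $(-\infty,M]$ using the monotone dependence on initial data.

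For parts (2) and (3), I would use a sliding/squeezing argument exploiting the concavity of the KPP nonlinearity. Given a second semi-wave $\hat u$, or an entire solution $u$ satisfying the hypotheses of (3), define
\begin{equation*}
\lambda^{*}=\inf\{\lambda\ge 0:\tilde u(x,t+\lambda)\ge u(x,t)\text{ for all }x\le h(t),\ t\in\RR\}.
\end{equation*}
The lower bound $u(x,t)\ge\epsilon$ on $(-\infty,h(t)-\delta]$ together with $u\le u^*_g$, property (b), and $h'>0$ imply $\lambda^*<\infty$ (a large time shift pushes $\tilde h$ past $h$ while $\tilde u$ has already relaxed close to $u^*_g$ behind its front). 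If $\lambda^*>0$, the strong maximum principle applied to the difference on the common domain, together with a Hopf-type argument at the free boundary that converts a touching interior inequality into a contradiction with the Stefan condition $h'(t)=-\mu u_x(h(t),t)$, forces $\lambda^*=0$. The reverse inequality is obtained symmetrically, proving $u(x,t)\equiv\tilde u(x,t+t_0)$ and, specialized to $u=\hat u$, the uniqueness up to time translation asserted in (2).

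The main obstacle I anticipate is the pair of uniform estimates in the construction: proving a positive lower bound $h'(t)\ge c_0$ that is uniform as $g$ varies over $H(a)$, and the accompanying boundary regularity needed for Hopf's lemma at the free boundary. In the periodic setting one can use a single periodic semi-wave to pin down $c_0$; in the almost periodic case this must be replaced by an argument using the one-cover of $\{u^*_g\}$ and the compactness of $H(a)$, so that uniform estimates on an orbit transfer to the whole hull. A closely related difficulty is verifying that the profile $v_g$ built by subsequential limits is genuinely a \emph{continuous} function of $g\in H(a)$ (rather than merely subsequentially well-defined); this continuity is recovered \emph{a posteriori} from the uniqueness statement in (2), which is why the construction, almost periodicity, and uniqueness ultimately have to be proved together.
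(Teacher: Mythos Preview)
Your construction outline is broadly in the spirit of the paper: build a candidate entire solution by a limit procedure, establish uniform bounds $0<c_0\le h'(t)\le C$, and extract the almost periodic structure from the hull compactness. The paper carries this out with a specific choice of initial data---truncations $u_{h_0,n,g}$ of the steady state $u^*_g$---and a double limit ($n\to\infty$, then $h_0\to-\infty$), passing through the fixed-boundary change of variables $v(\xi,\tau)=u(\xi+\tau,h^{-1}(\tau))$; your sketch is compatible with this.

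The substantive divergence is in how uniqueness and the one-cover property are obtained. The paper does \emph{not} use a time-sliding argument. Instead it relies on two ingredients you do not mention: (i) a \emph{zero-number} argument (Proposition~\ref{prop1}) showing that the constructed solution $\tilde u$ is the \emph{maximal} entire solution below $u^*_g$, and in particular that $\tilde u(x,t)$ is monotone in $t$; and (ii) a ratio functional $\rho(\tau,v_1,v_2)=\inf_{\xi<0} v_1(\xi,\tau)/v_2(\xi,\tau)$, shown via the KPP concavity to be nondecreasing in $\tau$ and strictly increasing when the infimum is attained in the interior (Lemma~\ref{lemma10}). Uniqueness then follows by contradiction with almost-periodic recurrence: if $\rho(\tau_0)<1$, strict monotonicity forces $\rho(\tau)>\rho(\tau_0)$ for $\tau>\tau_0$, but returning along a sequence $\tau_n\to\infty$ with $g(\cdot+\tau_n)\to g(\cdot+\tau_0)$ brings $\rho$ back to $\rho(\tau_0)$. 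The monotonicity of $\tilde u$ in $t$ (from the zero-number step) is exactly what supplies the hypothesis $-(\tilde v)_\xi+(\tilde v)_\tau\ge 0$ needed in Lemma~\ref{lemma10}, so (i) feeds (ii).

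Your sliding argument, as written, has a gap at the point you yourself flag. To have $\lambda^*<\infty$ you need $\tilde u(x,t+\lambda)\ge u(x,t)$ and $\tilde h(t+\lambda)\ge h(t)$ for \emph{all} $t\in\RR$ simultaneously. Your justification (``a large time shift pushes $\tilde h$ past $h$ while $\tilde u$ has already relaxed close to $u^*_g$'') only controls bounded $t$-windows; on the whole line both free boundaries run to $\pm\infty$ with the same average speed, and nothing in your outline prevents $\tilde h(t+\lambda)-h(t)$ from changing sign for every $\lambda$. In the paper this obstruction is removed precisely by the zero-number maximality: once $h(t_2)\le\tilde h(t_1)$, one already has $u(\cdot,t_2)\le\tilde u(\cdot,t_1)$, so comparison is decided by the boundary positions alone. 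A second issue is the ``reverse inequality obtained symmetrically'': the roles of the constructed $\tilde u$ (maximal, monotone in $t$) and a generic entire solution $u$ are not symmetric, so the backward slide needs a separate mechanism. The paper's ratio argument avoids both problems because it works entirely in the $v$-variables at matching boundary positions and never needs a global time ordering of the two free boundaries.
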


\section{Proof of the main results}
\begin{proof}[Proof of Proposition \ref{positivesteadystate}]  The existence and uniqueness of the positive bounded steady state of \eqref{equationR} is well-known, e.g. see 
\cite{DLiu}. The conclusion that $u^*_{ g(\cdot+s) }(x)=u^*_g(x+s)$ is also easy to be checked. We consider the almost periodic property of $u^*_g$. We know that there are some $s,S\in \RR$ with $0<s<S$ such that $s\leq g(x)\leq S, x\in \RR,g\in H(a)$. Hence, there are some $s',S'\in \RR$ with $0<s'<s<S<S'$ such that $s$ is a lower solution and $S'$ is an upper solution. Therefore, $s'\leq u^*_g(x)\leq S', x\in \RR,g\in H(a)$. Moreover, $(u^*_g)_x,(u^*_g)_{xx}$ are uniform bounded for $x\in \RR,g\in H(a)$. Let $\bar g=\lim\limits_{n\to \infty} g_n$ for some sequence $\{g_n\}$.  Then by choosing a subsequence of $\{u^*_{g_n}\}$, still denoted by $\{u^*_{g_n}\}$,  $\lim\limits_{n\to\infty} u^*_{g_n}$ exists in $L^\infty_{local}(\RR)\cap C(\RR)$.  Moreover, $v=\lim\limits_{n\to\infty} u^*_{g_n}$is the unique positive bounded steady state of $u_t=u_{xx}+u(\bar g(x)-u), x\in \RR$. This means that $v=u_{\bar g}^*$. Now, we find that $\bar H(u^*)=\{u^*_g|g\in H(a)\}$ is a one-cover of $H(a)$ in $L^\infty_{local}(\RR)\cap C(\RR)$.  We show that $\bar H(u^*)$ is also a one-cover of $H(a)$ in $L^\infty(\RR)\cap C(\RR)$. If not, there are some $g_n,\bar g\in H(a), n=1,2,\cdots$ with $\bar g=\lim\limits_{n\to \infty} g_n$ ,  and some $x_n\in \RR, n=1,2,\cdots$ such that $|u^*_{g_n}(x_n)-u^*_{\bar g}(x_n)|\geq \epsilon$ for some fixed $\epsilon>0$ and that $g_n(\cdot+x_n), \bar g(\cdot+x_n)\to g_0$ as $n\to \infty$. This means $|u^*_{g_n(\cdot+x_n)}(0)-u^*_{\bar g(\cdot+x_n)}(0)|\geq \epsilon$. On the other hand,  $u^*_{g_n(\cdot+x_n)}(0), u^*_{\bar g(\cdot+x_n)}\to u^*_{g_0}(0)$ as $n\to \infty$. It is a contradiction. 
Therefore, $\bar H(u^*)$ is also a one-cover of $H(a)$ in $L^\infty(\RR)\cap C(\RR)$. $u^*_g$ is almost periodic.

Let us consider the stability now. It is easy to see that $su^*$ is a lower solution of  \eqref{equationR} for any $0<s<1$ and an upper solution of \eqref{equationR} for any $s>1$. Hence, the solution $u(x,t)$ of  \eqref{equationR} with the initial data $su^*$ converges to one positive steady state as $t\to \infty$ which is just $u^*$. By the comparison principle, $u^*$ is globally stable. 
 \end{proof}

In this paper, when we consider the initial  data $(u_0, h_0)$ of  equation \eqref{main-eq-g}, we always assume that $h_0\in \RR$ and $u_0\in C((-\infty,h_0])$ is bounded. We say that the initial data $(u_0,h_0)$ is positive provided $u_0>0$ for $x<h_0$.
In \cite{ddl}, it is proved
\begin{proposition}[Theorem 2 .11, \cite{ddl} ]\label{estimate}
For any $g\in H(a)$ and any positive initial data $(u_0,h_0)$, the solution $( u, h)$ of \eqref{main-eq-g} with  initial data $(u_0,h_0)$ exists for $t>0$, and $h\in C^{1}\big((0,+\infty)\big)\cap C\big([0,\infty)\big)$, $u\in C^{1,2}(G_+)\cap C\big(\overline{G_+}\big)$ with $G_+=\big\{(x,t)\in\R^2:\,t\in (0,\infty),\,x\in (-\infty,h(t)] \big\}$.
Furthermore, for any $T>\tau>0$ and any $A\leq h_0$, there holds
\begin{equation*}
\big\|u\big\|_{C^{(1+\alpha)/2,1+\alpha}(G_{A,T}^{\tau})}+\big\|h\big\|_{C^{1+\alpha/2}([\tau,T])} \leq C,
\end{equation*}
where $G_{A,T}^\tau=\big\{(x,t)\in\R^2:\,t\in[\tau,T],\,x\in[A,h(t)] \big\}$, and $C$ is a positive constant depending on $\tau$, $\|u_0\|_{L^{\infty}((-\infty,h_0])}$. 
\end{proposition}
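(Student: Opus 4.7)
The plan is to follow the classical framework for one-phase Stefan-type free boundary problems: straighten the moving boundary to reduce \eqref{main-eq-g} to a fixed half-line, establish local existence by a Banach fixed-point argument in a H\"older class, use maximum-principle a priori bounds to extend the local solution globally, and finally invoke parabolic Schauder theory on compact subregions to obtain the claimed H\"older bound.

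First, I would introduce the straightening $y=x-h(t)$, $\tilde u(y,t):=u(y+h(t),t)$, which converts the moving domain into $\{y\le 0\}$ and gives the equivalent problem
\begin{equation*}
\tilde u_t=\tilde u_{yy}+h'(t)\,\tilde u_y+\tilde u\bigl(g(y+h(t))-\tilde u\bigr),\qquad y\le 0,\ t>0,
\end{equation*}
with $\tilde u(0,t)=0$ and the Stefan condition $h'(t)=-\mu\,\tilde u_y(0,t)$. For local existence on $[0,T_0]$, I freeze $h$ in the closed ball
\begin{equation*}
\Sigma_{T_0}=\bigl\{h\in C^{1+\alpha/2}([0,T_0]):\ h(0)=h_0,\ \|h'\|_{C^{\alpha/2}}\le K\bigr\},
\end{equation*}
solve the resulting linear parabolic problem for $\tilde u$ on $(-\infty,0]\times [0,T_0]$ by classical $L^p$/Schauder theory with bounded initial data, and define the return map $\Phi(h)(t):=h_0-\mu\int_0^t \tilde u_y(0,s)\,ds$. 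Boundary Schauder estimates give $\tilde u_y(0,\cdot)\in C^{\alpha/2}$, so $\Phi$ sends $\Sigma_{T_0}$ into itself; Lipschitz dependence of the linear problem on its coefficients makes $\Phi$ a contraction once $T_0$ is small, and Banach's theorem produces a unique local solution.

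Next, to globalize, observe that the nonlinearity is dissipative: $u(g-u)\le 0$ whenever $u\ge\|g\|_\infty$, so the maximum principle forces $0\le u\le M:=\max(\|u_0\|_\infty,\|g\|_\infty)$ throughout the maximal interval of existence. Comparing $\tilde u$ at the free boundary with an explicit parabolic supersolution in a thin tube $-\eta<y<0$ yields $-\tilde u_y(0,t)\le K_0$ uniformly, so $|h'|$ stays bounded, $h$ grows at most linearly, and the local theory iterates to all $t>0$. On any rectangle $G_{A,T}^\tau$, pulled back to a bounded region $[A-h(t),0]\times[\tau,T]$ in the straightened coordinates, the coefficients of the transformed equation are uniformly bounded in the relevant H\"older norms once $h\in C^{1+\alpha/2}$ is known. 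Interior and boundary parabolic Schauder estimates therefore give the claimed bound on $\|\tilde u\|_{C^{(1+\alpha)/2,1+\alpha}}$ with constant depending only on $\tau$, $\|u_0\|_\infty$ and on structural quantities controlled by $\|g\|_\infty$; transferring via $x=y+h(t)$ and inserting into $h'=-\mu\,\tilde u_y(0,\cdot)$ gives $h\in C^{1+\alpha/2}([\tau,T])$.

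The main obstacle is the compatibility defect at the corner $(h_0,0)$: since $u_0\in C((-\infty,h_0])$ need not vanish at $h_0$, one cannot expect H\"older regularity up to $t=0$ and the Schauder step must be performed away from this corner, which is precisely why the constant $C$ is forced to depend on $\tau>0$. This is handled either by approximating $u_0$ with data that vanish near $h_0$ and passing to the limit under uniform estimates on each $G_{A,T}^\tau$, or by working in weighted H\"older spaces adapted to the corner. The half-derivative loss in passing from $h$ to the trace $\tilde u_y(0,\cdot)$ is the other recurring technicality; it is what forces the natural class $C^{1+\alpha/2}$ for $h$ (rather than $C^1$) and requires careful parabolic Schauder estimates for a linear problem whose drift $h'$ is only H\"older continuous.
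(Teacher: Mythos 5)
The paper offers no proof of this proposition: it is quoted verbatim as Theorem 2.11 of \cite{ddl}, and your outline reproduces the standard argument used there and in Du--Lin \cite{DuLi} (straightening the boundary, contraction mapping for local existence, the $L^\infty$ bound plus the thin-tube supersolution for the uniform bound on $-u_x(h(t),t)$ to globalize, then interior-in-time Schauder estimates). So your proposal is essentially the same approach as the cited source, and correctly identifies why the constant must depend on $\tau$.
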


Let $\underline a=\inf_{x\in \RR}a(x)$. It is known that $0<\underline a=\inf_{x\in \RR}g(x),\forall g\in H(a)$ and that for any positive initial data $(u_0,h(0))$, the solution $(u,h)$ of\begin{equation}
\label{underlinea}\begin{cases}
u_t=u_{xx}+u(\underline a-u),\quad -\infty<x<h(t)\cr u(t,h(t))=0,h^{'}(t)=-\mu u_x(t,h(t))
\end{cases}
\end{equation}
satisfies that $h(t)/t\to c$ as $t\to +\infty$ for some $c>0$(\cite{DuLi}).
Then by the comparison principle  (e.g. see \cite{DuLi}), we have
\begin{lemma}\label{lemma}Let $(u,h)$ be the solution of \eqref{main-eq-g} with the positive initial data $(u_0,h(0))$. Then $\limsup_{t\to \infty}h'(t)\geq c$.\end{lemma}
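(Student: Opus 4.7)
The plan is to dominate \eqref{main-eq-g} from below by the homogeneous free boundary problem \eqref{underlinea} and then to upgrade the resulting lower bound on $h(t)/t$ to a lower bound on $\limsup_{t\to\infty} h'(t)$. More concretely, I will construct a subsolution whose free boundary asymptotically advances at speed $c$, invoke the free boundary comparison principle of \cite{DuLi}, and then rule out $\limsup h'(t)<c$ by integrating.

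For the subsolution, since $u_0$ is continuous and strictly positive on $(-\infty,h(0))$, fix some $\underline h_0\in(-\infty,h(0))$ and choose a continuous, non-negative function $\underline u_0$ on $(-\infty,\underline h_0]$ with $\underline u_0(\underline h_0)=0$, $\underline u_0>0$ on some open subinterval, and $\underline u_0\le u_0$ pointwise; this is feasible because $u_0$ is bounded below by a positive constant on any compact subset of $(-\infty,h(0))$, so any sufficiently small bump lying under $u_0$ works. Let $(\underline u,\underline h)$ denote the solution of \eqref{underlinea} starting from $(\underline u_0,\underline h_0)$; by the result recalled just before the statement of the lemma, $\underline h(t)/t\to c$ as $t\to +\infty$.

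Since $\underline a\le g(x)$ for every $x$, $\underline u$ satisfies $\underline u_t-\underline u_{xx}-\underline u(g(x)-\underline u)\le 0$ in the interior, together with the same Stefan-type boundary conditions that appear in \eqref{main-eq-g}; combined with $\underline h_0\le h(0)$ and $\underline u_0\le u_0$, the one-phase comparison principle for free boundary problems established in \cite{DuLi} then gives $\underline h(t)\le h(t)$ for all $t\ge 0$, so
\begin{equation*}
\liminf_{t\to\infty}\frac{h(t)}{t}\;\ge\;\lim_{t\to\infty}\frac{\underline h(t)}{t}\;=\;c.
\end{equation*}

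To finish, suppose for contradiction that $\limsup_{t\to\infty} h'(t)<c$; then there exist $\epsilon>0$ and $T\ge 0$ with $h'(t)\le c-\epsilon$ for all $t\ge T$. Integration yields $h(t)\le h(T)+(c-\epsilon)(t-T)$ for all $t\ge T$, hence $\limsup_{t\to\infty} h(t)/t\le c-\epsilon$, contradicting the inequality above. Therefore $\limsup_{t\to\infty} h'(t)\ge c$. The only slightly delicate step is checking the hypotheses of the comparison principle for the subsolution — in particular arranging $\underline u_0\le u_0$ while keeping $\underline u_0$ admissible as an initial datum for \eqref{underlinea} — but this is routine thanks to the strict positivity of $u_0$ in the interior; the spreading rate $\underline h(t)/t\to c$ and the comparison principle itself are imported directly from \cite{DuLi}.
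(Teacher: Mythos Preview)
Your proof is correct and follows exactly the route the paper indicates: in the paper the lemma is stated as an immediate consequence of the comparison principle with the homogeneous problem \eqref{underlinea} (citing \cite{DuLi}), and your argument simply spells out those details --- building a sub-initial datum for \eqref{underlinea}, applying the free-boundary comparison principle to obtain $\liminf_{t\to\infty} h(t)/t\ge c$, and then the elementary integration step to pass from this to $\limsup_{t\to\infty} h'(t)\ge c$. The only cosmetic point is that you may as well take $\underline u_0>0$ on all of $(-\infty,\underline h_0)$ rather than merely on a subinterval, so that $(\underline u_0,\underline h_0)$ is literally ``positive initial data'' in the sense used just before the lemma; this is harmless since $u_0>0$ on $(-\infty,\underline h_0]\subset(-\infty,h(0))$.
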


To emphasize the coefficient $g\in H(a)$, we use $(u(x,t;u_0, h_0,g), h(t;u_0,h_0,g)) $ to denote the solution of  \eqref{main-eq-g} with the initial data $(u_0,h_0)$.
\begin{lemma}\label{lemma1}
 Let $g\in H(a)$ and $(u(x,t;u_0, h_0,g), h(t;u_0,h_0,g)) $ be the solution of  \eqref{main-eq-g} with the positive initial data $(u_0,h_0)$. Then $\liminf_{t\to +\infty}h'(t;u_0,h_0,g)>0$.\end{lemma}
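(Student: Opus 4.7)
Argue by contradiction: assume $\liminf_{t\to+\infty} h'(t;u_0,h_0,g)=0$ and pick $t_n\to\infty$ with $h'(t_n)\to 0$. Translate to normalize the free boundary at the origin by setting $u_n(x,t):=u(x+h(t_n),t+t_n)$, $h_n(t):=h(t+t_n)-h(t_n)$, and $g_n(\cdot):=g(\cdot+h(t_n))$. The uniform Schauder-type bounds of Proposition~\ref{estimate} apply on windows $[-R,h_n(t)]\times[-T,T]$ as soon as $t_n>T$, so by Arzela-Ascoli and the compactness of $H(a)$, along a subsequence one obtains $u_n\to\tilde u$ in $C^{1,2}_{loc}$, $h_n\to\tilde h$ in $C^1_{loc}$, and $g_n\to\tilde g\in H(a)$ uniformly. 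The pair $(\tilde u,\tilde h)$ is an entire solution of \eqref{main-eq-g} with coefficient $\tilde g$, and satisfies $\tilde h(0)=0$ together with $\tilde h'(0)=\lim_n h'(t_n)=0$. The free boundary condition then forces $\tilde u_x(0,0)=0$.

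Since $\tilde u\geq 0$ in its domain, the strong maximum principle yields the dichotomy $\tilde u\equiv 0$ or $\tilde u>0$ on the parabolic interior $\{x<\tilde h(t)\}$. In the latter case Hopf's lemma at the boundary point $(0,0)$ gives $\tilde u_x(0,0)<0$, directly contradicting the conclusion of the previous paragraph. Consequently the whole argument reduces to ruling out $\tilde u\equiv 0$, i.e., to establishing a uniform lower bound
\begin{equation*}
u(h(t)-L,t)\,\geq\,\delta \qquad \text{for all}\ t\geq T_0,
\end{equation*}
for some constants $L,\delta,T_0>0$; this immediately gives $\tilde u(-L,0)\geq\delta$ upon passing to the limit.

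This uniform bound behind the moving front is the principal obstacle. My plan is to combine Lemma~\ref{lemma} with an explicit subsolution argument. Since $\limsup_{t\to\infty} h'(t)\geq c>0$ and $h'$ is $\alpha/2$-H\"older on $[\tau,\infty)$ by Proposition~\ref{estimate}, there is a set of times $s$ on which $h'(s)\geq c/2$; at each such $s$, $-u_x(h(s),s)\geq c/(2\mu)$, and the uniform H\"older control of $u_x$ in $x$ near the free boundary gives $u(x,s)\geq\delta_0$ on an interval $[h(s)-\eta,h(s)-\eta/2]$ with $\eta,\delta_0$ independent of $s$. This ``bump'' is then propagated forward in time by comparison against the homogeneous KPP semi-wave for $v_t=v_{xx}+v(\underline a-v)$ with free boundary and speed $c$, constructed in \cite{DuLi}. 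The main difficulty is that such a moving subsolution travels at the fixed speed $c$, whereas $h'(t)$ may dip below $c$ over long time intervals; one must therefore exploit the almost periodicity of $g$ to restart the subsolution at subsequent times when $h'$ is again large, in such a way that the lower bound stays anchored a \emph{bounded} distance $L$ behind $h(t)$ for all large $t$, rather than drifting off to a fixed spatial position while $h(t)\to+\infty$.
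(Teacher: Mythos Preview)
Your compactness/limit framework is sound up to the dichotomy, but the argument has a real gap exactly where you flag it: you never establish the uniform lower bound $u(h(t)-L,t)\geq\delta$ for all large $t$, and your sketch for it does not close. Knowing that $h'(s)\geq c/2$ on an unbounded set of times $s$ gives a bump behind the front at those particular times, but propagating it with a homogeneous semi-wave subsolution only yields $u(x,t)\geq \underline u(x,t)$ for $x\leq \underline h(t)$; to conclude $u(h(t)-L,t)\geq\delta$ you would need $h(t)-\underline h(t)\leq L$, and there is no reason this gap stays bounded. The ``restart'' idea requires that the good times $s$ have bounded gaps, which is precisely information about $h'$ not dipping for too long---essentially what you are trying to prove. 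In short, the uniform lower bound you need is of the same strength as the lemma itself, and you have not supplied an independent proof of it.

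The paper avoids this difficulty by a different choice of translation times. Fix $\epsilon\in(0,c)$. Since $\limsup h'>\epsilon$ and $h'(t_n)\to 0$, one can choose $s_n<t_n<s_{n+1}$ with $h'(s_n)=\epsilon$ and $h'\leq\epsilon$ on $[s_n,t_n]$. Translate at $s_n$ rather than at $t_n$: set $v^n(x,t)=u(x+h(s_n),t+s_n)$, $h^n(t)=h(t+s_n)-h(s_n)$. The limit entire solution $(v,\bar h)$ then satisfies $\bar h'(0)=\epsilon>0$, so it is automatically nontrivial and no lower bound behind the front is needed. If $t_n-s_n$ stays bounded along a subsequence with limit $T$, Hopf gives $\bar h'(T)>0$, contradicting $h'(t_n)\to 0$. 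Hence $t_n-s_n\to\infty$, which forces $\bar h'(t)\leq\epsilon$ for all $t\geq 0$, so $\bar h(t)\leq\epsilon t$. Comparing $(v,\bar h)$ with the homogeneous free-boundary problem \eqref{underlinea} yields $\underline h(t)\leq\bar h(t)\leq\epsilon t$ with $\underline h(t)/t\to c>\epsilon$, a contradiction. The key idea you are missing is to shift at the \emph{last} time before $t_n$ where $h'$ equals a fixed positive level, which simultaneously guarantees nontriviality of the limit and produces a global speed bound that clashes with Lemma~\ref{lemma}.
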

\begin{proof}
Suppose, for the sake of contradiction, $\liminf_{t\to +\infty}h'(t;u_0,h_0,g)=0$. That is, there is a sequence $\{t_n\}$ with $t_n\to +\infty$ as $n\to \infty$ such that $h'(t_n;u_0,h_0,g)\to 0$.  Lemma \ref{lemma} implies that there is a positive number $\epsilon$  such that $\limsup_{t\to \infty}h'(t;u_0,h_0,g)>\epsilon$. Then we choose another sequence $\{s_n\}$ such that $h'(s_n;u_0,h_0,g)=\epsilon$, $s_n<t_n<s_{n+1}, n\in \mathbb N$ and $h'(\tau;u_0,h_0,g)\leq \epsilon, \tau\in [s_n,t_n]$. Let $v^n(x,t)=u(x+h(s_n;u_0,h_0,g), t+s_n;u_0,h_0,g)$, $h^n(t)=h(\cdot+s_n;u_0,h_0,g)-h(s_n;u_0,h_0,g)$. $(v^n, h^n)$ is the solution of
\begin{equation}
\begin{cases}
v^n_t=v^n_{xx}+v^n(g(x+h(s_n;u_0,h_0,g))-v^n),\quad -\infty<x<h^n(t)\cr v^n(h^n(t),t)=0,(h^n)'(t)=-\mu v^n_x(h^n(t),t)
\end{cases}
\end{equation} with the initial data $(v^n(\cdot,0), 0)$ and this solution can be extended to $t\in (-s_n,+\infty)$.
By the priori estimates, we can find one subsequence of $(v^n,h^n)$, still denoted by $(v^n,h^n)$  such that there is some $(v,\bar h)$ satisfying $\bar h(0)=h^n(0)=0$, $h^n\to\bar h$ in $C^1([x_1,x_2])$ for any bounded interval $[x_1,x_2]$, $v^n\to v$ in $C^2 (\Omega)$  with $\Omega$ being any compact domain of $G_+=\{(t,x)|x\in (-\infty, \bar h(t)], t\in \RR\}$.  Moreover, we can suppose $g(x+h(s_n;u_0,h_0,g))\to \bar g(x)$ uniformly on $\RR$ and then $(v,\bar h)$ solves \begin{equation}
\begin{cases}
v_t=v_{xx}+v(\bar g(x)-v),\quad -\infty<x< \bar h(t)\cr v(\bar h(t),t)=0, \bar h{'}(t)=-\mu v_x(\bar h(t),t)
\end{cases}
\end{equation} with the initial data $(v(\cdot,0), 0)$.  Notice that $\bar h'(0)=-v_x(0,0)=\epsilon >0$ and $(v,\bar h)$ is an entire solution. The strong comparison principle implies $v(x,0)>0$ for $x\in (-\infty,\bar h(0))$.  If  $\{T_n=t_n-s_n\}$ has a bounded converging subsequence, still denoted by $\{T_n\}$, $T_n\to T$ as $n\to \infty$, then the Hopf lemma shows that $\bar h'(T)>0$. On the other hand, that $h'(t_n)\to 0$ as $n\to \infty$ implies that $\bar h'(T)=0$, it is a contradiction.  Hence, $T_n=t_n-s_n\to +\infty$ and then $\bar h'(t)\leq \epsilon$ for $t\in [0,+\infty)$.  However, let $(\underline v, \underline h)$ be the solution of
\eqref{underlinea}
with the initial data $(v(\cdot,0), 0)$. Then the comparison principle of free boundary problems yields that $\underline h(t)\leq \bar h(t)\leq \epsilon t$.  On the other hand, the work in \cite{DuLi} shows that $\underline h(t)/t\to c>0$ as $t\to +\infty$ for some positive number $c$. Choose $\epsilon<c$, we obtain one contradiction. The proof is complete. 
\end{proof}
Let 
$$
\mathcal H(x)=\begin{cases}1,\quad &x\leq -1\cr
-x,\quad &-1<x\leq 0
\end{cases}
$$
For one given $g\in H(a)$, define $u_{h_0,g}(x)$ and $u_{h_0,n,g}(x)$ by
$$
u_{h_0,g}(x)=\begin{cases} u^*_g(x),\quad &x<h_0\cr
0,\quad &x=h_0
\end{cases}
$$
and
$$
u_{h_0,n,g}(x)=\mathcal H(n(x-h_0))u_{h_0,0,g}(x).
$$
Then
$$
u_{h_0,n,g}(x)\ge u_{h_0,m,g}(x),\quad  x\le h_0,\ \forall n>m
$$
and
$$
u_{h_0,n,g}(x)\to u_{h_0,g}(x),\quad \forall\,\, x\le 0
$$
as $n\to\infty$. 

Let $(u(t,x; g,n,h_0), h_{g,n,h_0}(t))$ be the solution of  \eqref{main-eq-g} with the initial data $(u_{h_0,n,g}(x), h_0)$ for $g\in H(a)$.
\begin{lemma}\label{lemma2.4}For any $\epsilon>0$,  there are  two positive numbers $m<M$ such that $m<h'_{g,n,h_0}(t)<M $ for any $g\in H(a),$ positive integer $n$, $h_0\in \RR$ and $t\in [\epsilon,+\infty)$.
\end{lemma}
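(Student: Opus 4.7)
The plan is to prove the upper bound $M$ directly from parabolic regularity, and the lower bound $m$ by a contradiction argument modelled on the proof of Lemma \ref{lemma1}. For the upper bound, observe first that $u(\cdot,t;u_{h_0,n,g},h_0,g)\le u^*_g(\cdot)\le S'$ uniformly in $g\in H(a)$, $n$, and $h_0$, since $u^*_g$ is a time-independent supersolution (cf.\ Proposition \ref{positivesteadystate}). Applying Proposition \ref{estimate} on the window $[t_0-\epsilon/2,t_0+\epsilon/2]$ for any $t_0\ge\epsilon$ (treating the solution at time $t_0-\epsilon/2$ as new initial data, whose $L^\infty$ norm is bounded by $S'$), the resulting estimate depends only on $\epsilon$ and $S'$ and bounds $|u_x(h(t_0),t_0)|$ by a constant $C$; hence $|h'(t_0)|=\mu|u_x(h(t_0),t_0)|\le\mu C=:M$ uniformly.

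For the lower bound, suppose by contradiction that there exist sequences $g_k\in H(a)$, $n_k\in\NN$, $h_0^k\in\RR$, and $t_k\ge\epsilon$ with $h'_{g_k,n_k,h_0^k}(t_k)\to 0$. Translate by setting $\tilde g_k(\cdot)=g_k(\cdot+h^k(t_k))$, $\tilde u^k(x,t)=u^k(x+h^k(t_k),t+t_k)$, and $\tilde h^k(t)=h^k(t+t_k)-h^k(t_k)$, so that $\tilde h^k(0)=0$, $(\tilde h^k)'(0)\to 0$, and $(\tilde u^k,\tilde h^k)$ solves the free boundary problem with coefficient $\tilde g_k$ on $[-t_k,\infty)$. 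Using compactness of $H(a)$ and the uniform parabolic estimates away from the initial time $-t_k$, after passing to subsequences I may assume $\tilde g_k\to\bar g$ in $H(a)$, $t_k\to T\in[\epsilon,+\infty]$, and $(\tilde u^k,\tilde h^k)\to(\bar u,\bar h)$ in $C^{1,2}_{loc}\times C^1_{loc}$ on $(-T,\infty)\times\RR$ (all of $\RR\times\RR$ when $T=\infty$); the limit $(\bar u,\bar h)$ solves the free boundary problem with coefficient $\bar g$ and satisfies $\bar h(0)=0$ and $\bar h'(0)=0$.

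Once $\bar u\not\equiv 0$ is established, the parabolic strong maximum principle gives $\bar u>0$ on the interior $\{x<\bar h(t)\}$, and the Hopf boundary-point lemma at $(0,0)$ then forces $\bar u_x(0,0)<0$, so $\bar h'(0)=-\mu\bar u_x(0,0)>0$, the desired contradiction with $\bar h'(0)=0$. The main obstacle, therefore, is ruling out $\bar u\equiv 0$, which is especially delicate when $t_k\to\infty$ and the initial time of $\tilde u^k$ recedes to $-\infty$. To handle this I would compare $\tilde u^k$ from below with the solution $(\underline u^k,\underline h^k)$ of the autonomous free-boundary problem \eqref{underlinea} with the same initial data $(\tilde u^k(\cdot,-t_k),\tilde h^k(-t_k))$ at time $-t_k$. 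Because $\tilde g_k\ge\underline a$ and the initial data of $\tilde u^k$ coincides with $u^*_{\tilde g_k}\ge s'$ on $(-\infty,\tilde h^k(-t_k)-1/n_k]$, the comparison principle for free-boundary problems yields $\tilde u^k\ge\underline u^k$ pointwise. The analysis of the autonomous problem from \cite{DuLi} (spreading speed $c>0$ together with convergence to the associated semi-wave profile) then produces a uniform positive lower bound for $\underline u^k$, and hence for $\tilde u^k$, at some fixed interior point $(-x_0,0)$; passing to the limit gives $\bar u(-x_0,0)>0$ and closes the argument.
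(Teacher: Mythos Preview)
Your upper-bound argument via Proposition~\ref{estimate} is correct and is exactly what the paper does.

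For the lower bound your route differs from the paper's and contains a real gap. The paper simply reruns the proof of Lemma~\ref{lemma1}: rather than translating to the bad times $t_k$ where $h'\to 0$, one first locates earlier times $s_k<t_k$ with $h'_k(s_k)=\epsilon$ and $h'_k\le\epsilon$ on $[s_k,t_k]$, and translates to $s_k$. The limiting pair $(\bar u,\bar h)$ then satisfies $\bar h'(0)=\epsilon>0$, so nontriviality is free; the contradiction comes either from Hopf (if $t_k-s_k$ stays bounded) or from $\bar h(t)\le\epsilon t$ clashing with the homogeneous spreading speed $c>\epsilon$ (if $t_k-s_k\to\infty$).

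You instead translate to $t_k$ and must then exclude $\bar u\equiv 0$. The comparison $\tilde u^k\ge\underline u^k$ with the autonomous free-boundary problem does yield positivity, but only at points a fixed distance behind the \emph{homogeneous} free boundary $\underline h^k(0)$, not behind $\tilde h^k(0)=0$. Since
\[
-\underline h^k(0)=\tilde h^k(0)-\underline h^k(0)=\int_{-t_k}^{0}\bigl[(\tilde h^k)'(s)-(\underline h^k)'(s)\bigr]\,ds
\]
can grow without bound as $t_k\to\infty$ (the heterogeneous front, with speed up to $M>c$, may outrun the homogeneous one by roughly $(M-c)t_k$), the point $\underline h^k(0)-x_0$ at which Du--Lin gives a lower bound may drift to $-\infty$. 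Hence your claimed uniform lower bound ``at some fixed interior point $(-x_0,0)$'' is not justified, and $\bar u\equiv 0$ is not ruled out; the Hopf step at $(0,0)$ therefore cannot be invoked. The paper's device of translating to a time where $h'=\epsilon$ is precisely what sidesteps this difficulty.
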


\begin{proof}
The existence of the upper bound comes from Proposition \ref{estimate}. The proof of the existence of the lower bound is the same as Lemma \ref{lemma1}.


\end{proof}
The comparison principle implies that $u(t,x; g,n,h_0)$ and $h_{g,n,h_0}(t)$ are both increasing in $n$.  Let $\bar u (t,x;g,h_0)=\lim_{n\to \infty} u(t,x; g,n,h_0)$ and $\bar h(t;g,h_0) =\lim_{n\to \infty} h_{g,n,h_0}(t)$, we have 
 \begin{lemma}
$( \bar u (t,x;g,h_0), \bar h(t;g,h_0))$ solves \eqref{main-eq-g} for $t>0$. 
 \end{lemma}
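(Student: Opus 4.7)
The plan is to pass to the limit in the approximating free boundary problems, using the monotonicity in $n$ already established together with the uniform parabolic estimates of Proposition \ref{estimate}. Three uniform inputs are at hand: comparison with $u^*_g$ gives $0 \leq u(t,x;g,n,h_0) \leq u^*_g(x) \leq S'$ independently of $n$; Lemma \ref{lemma2.4} gives uniform two-sided bounds on $h'_{g,n,h_0}(t)$ on $[\epsilon,\infty)$; and Proposition \ref{estimate} then yields, for every $0<\tau<T$ and every $A\leq h_0$, the $n$-independent bound
\[
\bigl\|u(\cdot,\cdot;g,n,h_0)\bigr\|_{C^{(1+\alpha)/2,1+\alpha}(G_{A,T}^{\tau})}+\bigl\|h_{g,n,h_0}\bigr\|_{C^{1+\alpha/2}([\tau,T])}\leq C.
\]

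With these uniform bounds in hand, the previously established pointwise monotone convergences $h_{g,n,h_0}(t)\nearrow \bar h(t;g,h_0)$ and $u(t,x;g,n,h_0)\nearrow \bar u(t,x;g,h_0)$ upgrade, via Arzel\`a--Ascoli, to $h_{g,n,h_0}\to \bar h$ in $C^{1}_{\mathrm{loc}}((0,\infty))$ and $u(\cdot,\cdot;g,n,h_0)\to \bar u$ in $C^{1,2}_{\mathrm{loc}}(G_+)$, where $G_+=\{(x,t):t>0,\ x<\bar h(t)\}$. Compact-subset convergence is meaningful because for every compact $K\subset G_+$, continuity of $\bar h$ and monotonicity of $h_{g,n,h_0}$ together ensure $K$ lies inside the $n$-th parabolic domain once $n$ is large. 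Passing to the limit in the interior equation $u_t=u_{xx}+u(g(x)-u)$ is then immediate, and the Dirichlet condition $\bar u(\bar h(t),t)=0$ follows from $u(h_{g,n,h_0}(t),t;g,n,h_0)=0$ combined with uniform continuity of $u$ up to the moving boundary and $h_{g,n,h_0}\to \bar h$.

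The step I expect to require the most care is passing to the limit in the Stefan condition
\[
h'_{g,n,h_0}(t)=-\mu\,u_x\bigl(h_{g,n,h_0}(t),t;g,n,h_0\bigr),
\]
because the evaluation point lies on the boundary of the $n$-th domain, so plain $C^{1,2}_{\mathrm{loc}}(G_+)$-convergence does not directly give convergence of $u_x$ there. The standard remedy is to straighten the free boundary via $y=x-h_{g,n,h_0}(t)$, transforming each problem onto the fixed half-line $\{y\leq 0\}$. The estimates of Proposition \ref{estimate}, read in the straightened coordinates, give uniform $C^{(1+\alpha)/2,1+\alpha}$-bounds on the transformed solutions up to $\{y=0\}$; Arzel\`a--Ascoli on compact strips $\{y\in[-R,0],\,t\in[\tau,T]\}$ then yields convergence of both the transformed values and their $y$-derivatives on $\{y=0\}$. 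Translated back this gives
\[
u_x\bigl(h_{g,n,h_0}(t),t;g,n,h_0\bigr)\ \longrightarrow\ \bar u_x\bigl(\bar h(t),t\bigr)
\]
uniformly on compact $t$-intervals, and combined with $h'_{g,n,h_0}\to \bar h'$ this passes to the limit in the Stefan identity to yield $\bar h'(t)=-\mu\,\bar u_x(\bar h(t),t)$, completing the verification that $(\bar u,\bar h)$ solves \eqref{main-eq-g} for $t>0$.
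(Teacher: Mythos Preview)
The paper states this lemma without proof, evidently regarding it as a routine consequence of the monotonicity in $n$ combined with the uniform estimates of Proposition~\ref{estimate} and Lemma~\ref{lemma2.4}. Your argument is correct and supplies precisely the standard details: uniform $L^\infty$ and H\"older bounds, Arzel\`a--Ascoli to upgrade monotone pointwise convergence to convergence in stronger norms, and the boundary-straightening device to handle the Stefan condition at the moving boundary. One minor remark: Proposition~\ref{estimate} only gives $C^{(1+\alpha)/2,1+\alpha}$ control, so your claimed $C^{1,2}_{\mathrm{loc}}(G_+)$ convergence of $u$ in the interior implicitly relies on interior Schauder estimates to bootstrap; this is standard and harmless, but worth stating.
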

 Now we use the method of pulling back to transfer the equation with a free boundary to the one with a fixed boundary.
\begin{lemma} \label{lemma2}If $(u(x,t), h(t)) $ is one solution of  \eqref{main-eq-g} 
for $t\in (t_0,+\infty), t_0\in [-\infty,+\infty)$, then $v(\xi, \tau ):=u(\xi+\tau,h^{-1}(\tau))$ is one solution of 
 \begin{equation}
\label{veq}
\begin{cases}
-v_\xi(0,\tau)(-v_\xi+v_\tau)=v_{\xi\xi}+v(g(\xi+\tau)-v),\quad -\infty<\xi<0,\tau>h_0\cr v(0,\tau)=0,\tau>h_0,
\end{cases}
\end{equation} 
for $\tau>h_0$. Here $h_0=\lim_{t\to t_0}h(t)$.  Moreover, if $t_0>-\infty$, and $u(x,t)\to u_0(x)$ as $t\to t_0$ locally uniformly, then $v(\xi,\tau)\to u(\xi+h_0)$ as $\tau\to h_0$ locally uniformly.

On the other hand,  suppose that $h_0\in [-\infty,+\infty)$, $v(\xi,\tau)$ is one solution of \eqref{veq} for $\tau>h_0$ and $v_\xi(0,\tau)<0, \tau>h_0$. Let $h(t)$ be the solution of $h'(t)=-\mu v_\xi(0,,h(t))$ with $h(t_0)=h_0$, $u(x,t)=v(x-h(t),h(t))$. Then $(u(x,t),h(t))$ is one solution of  \eqref{main-eq-g}. Specially, $h_0=-\infty$ means $t_0=-\infty$.

Finally, that $(u,t)$ is one almost periodic semi-wave  solution of \eqref{main-eq-g} is equivalent to that $v=v(\xi,\tau)$ is almost periodic in $\tau$ from $\RR$ to $L^\infty_{local}((-\infty,0])\cap C((-\infty,0])$.
\end{lemma}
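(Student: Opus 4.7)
The plan is to treat this lemma as a change-of-variables dictionary between the free boundary formulation \eqref{main-eq-g} and the fixed boundary formulation \eqref{veq}, verifying each of the four assertions by chain-rule computations.

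First, for the forward direction, observe that since $u>0$ on $(-\infty,h(t))$ with $u(h(t),t)=0$, the Hopf boundary lemma gives $u_x(h(t),t)<0$, so the Stefan condition forces $h'(t)>0$. Hence $h$ is strictly increasing on $(t_0,+\infty)$ and admits a $C^1$ inverse onto $(h_0,+\infty)$. Differentiating $v(\xi,\tau)=u(\xi+\tau,h^{-1}(\tau))$ gives $v_\xi=u_x$, $v_{\xi\xi}=u_{xx}$, and $v_\tau=u_x+u_t/h'(t)$, so $u_t=h'(t)(v_\tau-v_\xi)$. The boundary and Stefan conditions translate to $v(0,\tau)=0$ and $h'(t)=-\mu v_\xi(0,\tau)$, and substituting into $u_t=u_{xx}+u(g(x)-u)$ with $g(x)=g(\xi+\tau)$ produces exactly \eqref{veq}. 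The initial trace when $t_0>-\infty$ is then immediate from the joint continuity of $u$ and $h$ at $t_0$.

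The converse direction is the same computation run in reverse. Given $v$ solving \eqref{veq} with $v_\xi(0,\tau)<0$, the ODE $h'(t)=-\mu v_\xi(0,h(t))$ has positive, locally Lipschitz right-hand side, so it has a unique strictly increasing solution with $h(t_0)=h_0$. Setting $u(x,t)=v(x-h(t),h(t))$ and reading the chain rule backward, \eqref{veq} becomes $u_t=u_{xx}+u(g(x)-u)$, while $u(h(t),t)=v(0,h(t))=0$ and $h'(t)=-\mu v_\xi(0,h(t))=-\mu u_x(h(t),t)$ are built into the construction. For the special case $h_0=-\infty$, apply interior parabolic regularity to \eqref{veq} (using the boundedness of $v$) to obtain a uniform bound on $|v_\xi(0,\tau)|$; then $|h'(t)|$ is uniformly bounded, so $h(t)\to -\infty$ can only occur as $t\to -\infty$, i.e.\ $t_0=-\infty$.

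For the final equivalence, unpack Definition \ref{traveling-wave2}: the semi-wave conditions on $(u(\cdot,\cdot;g),h(\cdot;g))$ are precisely the relation $v(\xi,\tau+s;g)=v(\xi,\tau;g(\cdot+s))$ together with the one-cover property of $\{v(\cdot,\cdot;g)\}$ in $L^\infty_{\rm loc}((-\infty,0]\times\RR)\cap C((-\infty,0]\times\RR)$, which is exactly what it means for $\tau\mapsto v(\cdot,\tau)$ to be almost periodic from $\RR$ into $L^\infty_{\rm loc}((-\infty,0])\cap C((-\infty,0])$. The only point requiring more than routine bookkeeping is the ``$h_0=-\infty\Rightarrow t_0=-\infty$'' clause, since it depends on the uniform bound for $v_\xi(0,\cdot)$ coming from parabolic regularity rather than directly from the equation \eqref{veq}; the remaining assertions are direct substitutions into the chain rule.
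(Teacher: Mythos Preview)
Your proposal is correct. The paper states this lemma without proof, treating it as a routine change-of-variables computation; your argument supplies exactly the chain-rule verification and bookkeeping that the paper omits, and is essentially the only reasonable approach. One small remark: the chain-rule substitution actually produces a factor of $\mu$ on the left side of \eqref{veq} (since $h'(t)=-\mu v_\xi(0,\tau)$), so the displayed equation in the paper appears to carry a typographical omission; your computation is right and your phrase ``produces exactly \eqref{veq}'' should be read modulo that missing $\mu$.
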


By Lemma \ref{lemma2},  $( \bar u (t,x;g,h_0), \bar h(t;g,h_0))$ can generate one solution $v(\xi,\tau;g,h_0)$ of \eqref{veq} for $\tau>h_0$. Let $h_0\to -\infty$. By choosing one subsequence, we have $\tilde v(\xi,\tau;g)=\lim_{n\to \infty}v(\xi,\tau;g,h_0^n)$ as $h_0^n\to -\infty$ locally uniformly for $\xi\leq 0, \tau\in \RR$. We will show later that $\tilde v$ does not depend on the choice of the subsequence. 
\begin{lemma}
$\tilde v(\xi,\tau;g)$ is one entire solution of  \eqref{veq}. 
\end{lemma}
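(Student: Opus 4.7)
The plan is to establish uniform parabolic estimates for the family $\{v(\cdot,\cdot;g,h_0^n)\}$ on compact subsets of $(-\infty,0]\times\RR$, extract a convergent subsequence via Arzel\`a--Ascoli, and pass to the limit in \eqref{veq}. Fix a compact rectangle $K=[-L,0]\times[-T,T]$ and consider $n$ large enough that $h_0^n<-T-1$, so that $v(\xi,\tau;g,h_0^n)$ is defined on $K$. The key observation is that via Lemma \ref{lemma2}, $v(\xi,\tau;g,h_0^n)=\bar u(\xi+\tau,t_n(\tau);g,h_0^n)$ where $t_n(\tau)=\bar h^{-1}(\tau;g,h_0^n)$, and by Lemma \ref{lemma2.4} there are constants $m,M>0$ independent of $g,n,h_0^n$ with $m\le\bar h'(t;g,h_0^n)\le M$ for $t\ge 1$. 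Consequently $t_n(\tau)\ge(\tau-h_0^n)/M\to+\infty$ uniformly on $[-T,T]$ as $h_0^n\to-\infty$; in particular, eventually $t_n(\tau)$ is as far from the initial time as we wish.

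Next I would produce a uniform sup-bound: the comparison principle (used exactly as in Proposition \ref{positivesteadystate}) gives $\bar u(x,t;g,h_0^n)\le u_g^*(x)\le\sup_{g\in H(a)}\|u_g^*\|_{L^\infty}<\infty$, a bound independent of $n$. Applying Proposition \ref{estimate} with a fixed $\tau=1$, and bootstrapping to $C^{1+\alpha/2,2+\alpha}$ by standard interior Schauder theory, yields uniform interior $C^{1,2}$ estimates on $\bar u(\cdot,\cdot;g,h_0^n)$ that depend only on the sup-bound above. The chain rule $v_\xi=\bar u_x$, $v_{\xi\xi}=\bar u_{xx}$, $v_\tau=\bar u_x+\bar u_t/\bar h'(t_n(\tau))$ then transfers these to uniform $C^{1,2}$ bounds for $v(\xi,\tau;g,h_0^n)$ on $K$; here the lower bound $\bar h'\ge m>0$ is essential to control $v_\tau$.

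By Arzel\`a--Ascoli and a diagonal argument across a compact exhaustion of $(-\infty,0]\times\RR$, we extract a subsequence $v(\cdot,\cdot;g,h_0^{n_k})$ converging in $C^{1,2}_{loc}$ to some $\tilde v$. The $C^{1,2}_{loc}$ convergence lets us pass to the limit term-by-term in \eqref{veq}, including the coefficient $v_\xi(0,\tau)$ on the left-hand side, and the boundary condition $v(0,\tau)=0$ is trivially preserved. Hence $\tilde v$ solves \eqref{veq} on all of $(-\infty,0]\times\RR$, which is precisely what it means to be an entire solution. The chief obstacle is the uniformity of the parabolic estimates as $h_0^n\to-\infty$: since the initialization time $h_0^n$ escapes to $-\infty$ in the pulled-back variable, one has to verify that the physical time $t_n(\tau)$ stays bounded \emph{away} from $0$ (in fact diverges) for $\tau$ in any prescribed compact set, so that the interior regularity of Proposition \ref{estimate} applies with a uniform constant.
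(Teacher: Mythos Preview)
The paper states this lemma without proof, treating it as a routine consequence of the construction and the a~priori estimates already assembled. Your argument supplies precisely the standard details the paper tacitly invokes: a uniform $L^\infty$ bound via comparison with $u_g^*$, interior parabolic regularity from Proposition~\ref{estimate} (applied at physical times $t_n(\tau)\to+\infty$, so the constant is uniform), transfer of those estimates to the $(\xi,\tau)$ variables using the two-sided bound on $\bar h'$ from Lemma~\ref{lemma2.4}, Arzel\`a--Ascoli plus a diagonal extraction, and term-by-term passage to the limit in \eqref{veq}, including the boundary coefficient $v_\xi(0,\tau)$. This is correct and is the intended approach.

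One minor point worth making explicit: Lemma~\ref{lemma2.4} is stated for $h'_{g,n,h_0}$, not directly for $\bar h'(\cdot;g,h_0)$. The bound does pass to the limit $n\to\infty$ because the $C^{1+\alpha/2}$ estimate on $h$ in Proposition~\ref{estimate} forces $h_{g,n,h_0}\to\bar h(\cdot;g,h_0)$ in $C^1_{loc}$; you might say this in one line.
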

Furthermore, let $ \tilde h(t;g)$ be one solution of the ordinary differential equation $\tilde h'(t;g)=-\mu\frac{\partial \tilde v(\xi,\tilde h(t;g);g)}{\partial \xi}|_{\xi=0}$ and $\tilde u(x,t;g)=\tilde v(x-\tilde h(t;g),\tilde h(t;g);g)$ for $t\in \RR$ and $x\leq \tilde h(t;g)$.
In what follows, we will show that  $(\tilde u(x,t;g), \tilde h(t;g))$  is the unique almost periodic semi-wave solution of  \eqref{main-eq-g} connecting $u^*_g$ and $0$.  First of all, 
\begin{lemma}\label{lemma8} $(\tilde u(x,t;g), \tilde h(t;g))$ is one entire solution of \eqref{main-eq-g}.  Moreover, $\bar u(t_n,x;g,h_0^n)\to \tilde u(x,t_0;g)$ locally uniformly for $x\leq \tilde h(t_0;g)$ provided $\bar h (t_n;g,h_0^n)\equiv \tilde  h(t_0;g)$ for any $t_0\in \RR$. 
\end{lemma}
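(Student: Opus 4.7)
The plan has two parts matching the two assertions. For the first, to show $(\tilde u,\tilde h)$ is an entire solution of \eqref{main-eq-g}, I aim to apply the second half of Lemma \ref{lemma2}, which converts any solution $\tilde v$ of \eqref{veq} with $\tilde v_\xi(0,\cdot)<0$ into a free-boundary solution via $\tilde u(x,t)=\tilde v(x-\tilde h(t),\tilde h(t);g)$. The nontrivial input is that the ODE $\tilde h'(t)=-\mu\tilde v_\xi(0,\tilde h(t);g)$ admits a global $C^1$ solution that is a bijection $\RR\to\RR$. Here Lemma \ref{lemma2.4} is decisive: its uniform bound $m\le h_{g,n,h_0^n}'(t)\le M$ for $t\ge\epsilon$ translates, via the Stefan condition, to $m\le -\mu v_\xi(0,\tau;g,h_0^n)\le M$ on ranges of $\tau$ exhausting $\RR$ as $h_0^n\to-\infty$. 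Upgrading the locally uniform convergence $v(\cdot,\cdot;g,h_0^n)\to\tilde v(\cdot,\cdot;g)$ to $C^{1,2}_{\mathrm{loc}}$ convergence via the parabolic estimate in Proposition \ref{estimate}, the same two-sided bound transfers to $-\mu\tilde v_\xi(0,\tau;g)$ for every $\tau\in\RR$. Then $\tilde h$ exists globally with $\tilde h'(t)\in[m/\mu,M/\mu]$, so $\tilde h(\pm\infty)=\pm\infty$, and Lemma \ref{lemma2} delivers $(\tilde u,\tilde h)$ as an entire solution of \eqref{main-eq-g}.

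For the second assertion, fix $t_0\in\RR$ and set $\tau_0:=\tilde h(t_0;g)$. For all sufficiently large $n$ we have $h_0^n<\tau_0$; since $\bar h(\cdot;g,h_0^n)$ is strictly increasing on $[0,\infty)$ and tends to $+\infty$, there is a unique $t_n>0$ with $\bar h(t_n;g,h_0^n)=\tau_0$, which is precisely the sequence singled out in the hypothesis. The pull-back formula $v(\xi,\tau;g,h_0)=\bar u(h^{-1}(\tau),\xi+\tau;g,h_0)$ of Lemma \ref{lemma2}, evaluated at $\tau=\tau_0$ and $x=\xi+\tau_0$, gives
\begin{equation*}
\bar u(t_n,x;g,h_0^n) \;=\; v(x-\tau_0,\tau_0;g,h_0^n), \qquad x\le\tau_0.
\end{equation*}
Letting $n\to\infty$ and using the locally uniform convergence $v(\cdot,\cdot;g,h_0^n)\to\tilde v(\cdot,\cdot;g)$ on $(-\infty,0]\times\RR$, the right side tends to $\tilde v(x-\tau_0,\tau_0;g)=\tilde u(x,t_0;g)$ locally uniformly in $x\le\tilde h(t_0;g)$, which is exactly the claim.

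The main obstacle is the derivative-trace step in Part 1: a priori the subsequential convergence $v(\cdot,\cdot;g,h_0^n)\to\tilde v$ is only in $L^\infty_{\mathrm{loc}}\cap C$, which does not by itself control $v_\xi$ at $\xi=0$. To bridge this gap I would combine the uniform $C^{(1+\alpha)/2,1+\alpha}$ bound of Proposition \ref{estimate} on $\bar u$ with the change of variables in Lemma \ref{lemma2} (legitimate because $h_{g,n,h_0^n}'$ is uniformly bounded above and below by Lemma \ref{lemma2.4}) to obtain uniform H\"older bounds on $v_\xi(0,\cdot;g,h_0^n)$; a standard Arzel\`a--Ascoli extraction then yields pointwise convergence of the trace, and the two-sided bound persists in the limit. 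Once this is in place, the PDE, the Dirichlet condition $\tilde u(\tilde h(t),t)=0$, and the Stefan condition $\tilde h'(t)=-\mu\tilde u_x(\tilde h(t),t)$ are immediate consequences of the substitution in Lemma \ref{lemma2}.
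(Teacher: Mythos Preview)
The paper states this lemma without proof, treating it as a direct consequence of Lemma~\ref{lemma2} together with the limiting construction of $\tilde v$; your argument is precisely the natural unpacking of that implicit reasoning and is correct. One small slip: since the Stefan condition reads $h'(t)=-\mu u_x(h(t),t)=-\mu v_\xi(0,h(t))$, the bound $m\le -\mu v_\xi(0,\tau)\le M$ already gives $\tilde h'(t)\in[m,M]$, not $[m/\mu,M/\mu]$; this does not affect the conclusion that $\tilde h:\RR\to\RR$ is a bijection.
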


Then, by the theory of  zero number of parabolic equations, we have

\begin{proposition}\label{prop1}Let $g\in H(a)$ and $(u(x,t),h(t))$  be a positive entire solution of \eqref{main-eq-g}. Suppose that  $u(x,t)< u^*_g(x)$ for any $t\in \RR, x\leq h(t)$. Then $u(x,t_2)\leq \tilde u(x,t_1;g)$ for any $x\leq h(t_2)$ provided $h(t_2)\leq \tilde h(t_1;g)$. Specially,  $ \tilde u(x,t;g)$ is increasing in $t$. 
\end{proposition}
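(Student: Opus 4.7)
The plan is to approximate $\tilde u(\cdot,t_1;g)$ by the free-boundary solutions $\bar u(t_n^*,\cdot;g,h_0^n)$ from Lemma~\ref{lemma8}, compare the given entire solution $(u,h)$ against each $(\bar u(\cdot,\cdot;g,h_0^n),\bar h(\cdot;g,h_0^n))$ by the free-boundary comparison principle in \cite{DuLi}, and then send $n\to\infty$. Before doing so I would first show that $h(t)\to -\infty$ as $t\to -\infty$: the Hopf lemma gives $h'(t)=-\mu u_x(h(t),t)>0$, so $h$ is strictly increasing and $h(t)\to h_{-\infty}\in[-\infty,\sup h)$ as $t\to -\infty$. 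If $h_{-\infty}$ were finite, Proposition~\ref{estimate} together with compactness would produce a subsequential limit $(u_\infty,h_{-\infty})$ which is a bounded stationary profile on $(-\infty,h_{-\infty}]$ with $u_\infty(h_{-\infty})=u_\infty'(h_{-\infty})=0$; ODE uniqueness forces $u_\infty\equiv 0$, and a standard KPP lower-bound argument on a sub-interval on which the principal Dirichlet eigenvalue of $-\partial_{xx}-g$ is negative would then contradict the positivity of $u(\cdot,0)$.

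Next, I fix $h_0^n\to -\infty$ and let $t_n^*>0$ be determined by $\bar h(t_n^*;g,h_0^n)=\tilde h(t_1;g)$, so that $\bar u(t_n^*,\cdot;g,h_0^n)\to \tilde u(\cdot,t_1;g)$ locally uniformly on $(-\infty,\tilde h(t_1;g)]$ by Lemma~\ref{lemma8}. Set $s_n:=t_2-t_n^*\to -\infty$, whence $h(s_n)\to -\infty$. The hypothesis $h(t_2)\leq \tilde h(t_1;g)$ combined with the strict monotonicity of $\bar h(t_2-s_n;g,h_0)$ in the initial height $h_0$ allows me to arrange (after possibly perturbing $h_0^n$) that $h(s_n)\leq h_0^n$ for all large $n$; this is where the intersection-count/zero-number reasoning announced in the paper does its work, by forbidding the incompatible configuration $h_0^n<h(s_n)$ via a bound on the number of sign changes of $u(\cdot,s_n)-u_{h_0^n,g}$ near the free boundary. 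The standing hypothesis $u<u^*_g$ then gives $u(\cdot,s_n)<u^*_g(\cdot)=\bar u(0,\cdot;g,h_0^n)$ on $(-\infty,h(s_n)]$, together with $h(s_n)\leq h_0^n=\bar h(0;g,h_0^n)$, so the free-boundary comparison principle of \cite{DuLi} delivers
\[
u(x,s_n+t)\leq \bar u(t,x;g,h_0^n),\qquad h(s_n+t)\leq \bar h(t;g,h_0^n)
\]
for $x\leq h(s_n+t)$ and $t\geq 0$. Putting $t=t_n^*$ and sending $n\to\infty$ via Lemma~\ref{lemma8} yields $u(x,t_2)\leq \tilde u(x,t_1;g)$ for $x\leq h(t_2)$.

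For the final assertion, the strong maximum principle applied to each approximant $\bar u(\cdot,\cdot;g,h_0^n)$ (which differs from $u^*_g$ at the free boundary) and passed to its limit gives $\tilde u<u^*_g$ strictly, so $(\tilde u,\tilde h)$ itself satisfies the standing hypotheses of the proposition. Applying the inequality just proved with $(u,h)=(\tilde u,\tilde h)$ and any $t_2<t_1$, so that $\tilde h(t_2)<\tilde h(t_1)$ since $\tilde h'>0$, yields $\tilde u(x,t_2;g)\leq \tilde u(x,t_1;g)$, that is, monotonicity in $t$. The hard part will be the coordination in the middle paragraph: both $h_0^n$ and $h(s_n)$ tend to $-\infty$, and making their relative rates compatible so that simultaneously $\bar h(t_n^*;g,h_0^n)=\tilde h(t_1;g)$ and $h(s_n)\leq h_0^n$ hold is precisely where the zero-number theory earns its keep.
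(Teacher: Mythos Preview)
Your proposal has a genuine gap at the coordination step, and your invocation of zero-number theory there is not how the paper uses it.

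You try to apply the free-boundary comparison principle directly: pick $h_0^n\to-\infty$, define $t_n^*$ by $\bar h(t_n^*;g,h_0^n)=\tilde h(t_1;g)$, set $s_n=t_2-t_n^*$, and hope that $h(s_n)\le h_0^n$. But $s_n$ is determined by $h_0^n$ (through $t_n^*$), so ``perturbing $h_0^n$'' moves $s_n$ as well; you are chasing a moving target. There is no a~priori relation between the rate at which $h(s_n)\to-\infty$ and the rate at which $h_0^n\to-\infty$: if the entire solution's speed happens to be slower than that of the approximant, the inequality $h(s_n)\le h_0^n$ can fail for all large $n$. Your sentence ``zero-number reasoning \dots forbids the incompatible configuration $h_0^n<h(s_n)$'' is a misreading of what zero-number theory does: it bounds the number of sign changes of the \emph{difference} of two solutions of the same parabolic equation; it says nothing about the relative location of two free boundaries at an initial moment.

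The paper's argument avoids this coordination problem entirely by using zero-number theory in a direct and different way. One observes that, because $u_x$ is uniformly bounded and $u(\cdot,t)<u^*_g$, for every sufficiently large $n$ the truncated profile $u_{h_0,n,g}$ has \emph{at most one} sign change with $u(\cdot,t)$, for \emph{every} $t\in\RR$ and every $h_0$. One then runs the approximant $u(\cdot,\cdot;g,n,h_0)$ forward from $u_{h_0,n,g}$ and the entire solution forward from $u(\cdot,t_2-t_1)$; by the zero-number theorem (adapted to the free-boundary setting, cf.\ \cite{ASB,lls}) the at-most-one-crossing property persists. When $h(t_2)\le h_{g,n,h_0}(t_1)$ this forces the full ordering $u(t_1,x;g,n,h_0)\ge u(x,t_2)$ on $x\le h(t_2)$, with no need to arrange anything about the initial free boundaries. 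Passing $n\to\infty$ and then $h_0\to-\infty$ along the sequence of Lemma~\ref{lemma8} gives the conclusion. In short: the paper uses zero-number theory to replace the comparison principle, not to set it up; your attempt to reduce to a pure comparison argument does not close.
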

\begin{proof}
Since $(u(x,t),h(t))$ is one bounded entire solution, $u_x(x,t)$ is uniform bounded (see Proposition \ref{estimate}). Hence there is some integer $N$ such that for any $n>N$, and any $t\in \mathbb R$ $$
\begin{cases}u_{h_0,n,g}(x)\geq u(x,t),\ for\ x\leq h(t), \; & if\  h(t)\leq h_0;\cr
\exists \ x_0<h_0 \begin{cases} u_{h_0,n,g}(x)>u(x,t), \ for\ x< x_0;\cr\  u_{h_0,n,g}(x)<u(x,t), \ for\ x _0< x\leq h_0\end{cases}\quad &if \ h(t)>h_0.
\end{cases}
$$

Hence, by the theory of the zero numbers of parabolic equations (e.g. see Theorem D in \cite{ASB} or Lemma 4.1 in \cite{lls}), for any $n>N$, $t_1>0$, $t_2\in \mathbb R$
$$
\begin{cases}u(t_1,x; g,n,h_0)\geq u(x,t_2),\ for\ x\leq h(t_2), \; & if\  h(t_2)\leq  h_{g,n,h_0}(t_1);\cr  \exists\ x_0<h_0 \begin{cases}
u(t_1,x; g,n,h_0)>u(x,t_2), \ for\ x< x_0;\cr  u(t_1,x; g,n,h_0)<u(x,t_2), \ for\ x_0< x\leq h_{g,n,h_0}(t_1) \end{cases}\quad &if h(t_2)\geq  h_{g,n,h_0}(t_1),
\end{cases}
$$
where $x_0$ depends on $n, t_1,t_2$.
Let $n\to \infty$,
\begin{equation}\label{above1}
\begin{cases}\bar u(t_1,x; g,h_0)\geq u(x,t_2),\ for\ x\leq h(t_2), \; & if\  h(t_2)\leq  \bar h(t_1;g,h_0);\cr  \exists\ x_0<h_0 \begin{cases}
\bar u(t_1,x; g,h_0)>u(x,t_2), \ for\ x< x_0;\cr  \bar u(t_1,x; g,h_0)<u(x,t_2), \ for\ x_0< x\leq  \bar h(t_1;g,h_0) \end{cases}\quad &if h(t_2)\geq   \bar h(t_1;g,h_0),
\end{cases}
\end{equation} where $x_0$ depends on $t_1,t_2$.

Notice that \eqref{above1} holds for any $h_0\in \mathbb R$. Take a subsequence of $h_0\to -\infty$,   $\tilde u(t_1,x; g)\geq u(x,t_2)$ for $\ x\leq h(t_2)$ if $\  h(t_2)\leq  \tilde h(t_1;g)$ by Lemma \ref{lemma8}.
\end{proof}

\begin{lemma}\label{lemma2.9}
 There are some $\epsilon,\delta>0$ such that $\tilde u(\tilde h(t;g)-\delta,t;g)\geq \epsilon$ for $t\in \RR$ and $g\in H(a)$.  Moreover, $|\tilde u(x+\tilde h(t),t;g)- u^*_g(x+h(t))|\to 0$ as $x+\tilde h(t)\to -\infty$ uniformly for $t\in \RR$ and $g\in H(a)$. And hence, $|\tilde v(\xi,\tau;g)-u^*_{g(\cdot+\tau)}(\xi)|\to 0$ as $\xi\to -\infty$ uniformly for $\tau\in \RR$ and $g\in H(a)$.
\end{lemma}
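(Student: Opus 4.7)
The plan is to establish the two assertions in sequence: first the uniform non-degeneracy $\tilde u(\tilde h(t;g)-\delta,t;g)\geq\epsilon$ near the free boundary, and then the convergence to the steady state deep in the interior via a compactness plus Liouville-type argument.

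For the first assertion, I would exploit the uniform lower bound $h'_{g,n,h_0}(t)\geq m>0$ from Lemma \ref{lemma2.4}, which passes to both limits $n\to\infty$ and $h_0\to-\infty$ to give $\tilde h'(t;g)\geq m$ for every $t\in\RR$ and every $g\in H(a)$ (a time-translation handles the fact that Lemma \ref{lemma2.4} is stated only for $t\geq\epsilon$). The Stefan condition then yields $\tilde u_x(\tilde h(t;g),t;g)=-\tilde h'(t;g)/\mu\leq -m/\mu$. Combining this with the uniform $C^{(1+\alpha)/2,1+\alpha}$ estimate on $u_x$ supplied by Proposition \ref{estimate} (transferred to $\tilde u$ along the construction), I choose a single $\delta>0$ so that $\tilde u_x(x,t;g)\leq -m/(2\mu)$ whenever $x\in[\tilde h(t;g)-\delta,\tilde h(t;g)]$, uniformly in $t$ and $g$. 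Integrating from $\tilde h(t;g)-\delta$ to $\tilde h(t;g)$ and using $\tilde u(\tilde h(t;g),t;g)=0$ then gives $\tilde u(\tilde h(t;g)-\delta,t;g)\geq m\delta/(2\mu)=:\epsilon$.

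For the convergence assertion I would argue by contradiction: assume sequences $y_n,t_n\in\RR$, $g_n\in H(a)$ with $\tilde h(t_n;g_n)-y_n\to+\infty$ and $|\tilde u(y_n,t_n;g_n)-u^*_{g_n}(y_n)|\geq\epsilon_0>0$. The construction forces $\tilde u\leq u^*_g$ (each approximating $u(t,x;g,n,h_0)\leq u^*_g(x)$ by comparison against the stationary supersolution, and this survives the two limits), so the assumption becomes $u^*_{g_n}(y_n)-\tilde u(y_n,t_n;g_n)\geq\epsilon_0$. Translate by setting $U_n(x,s):=\tilde u(x+y_n,s+t_n;g_n)$, $\hat g_n(x):=g_n(x+y_n)$, $H_n(s):=\tilde h(s+t_n;g_n)-y_n$; then $U_n$ solves the free-boundary problem with coefficient $\hat g_n$, and $H_n(0)\to+\infty$. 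By compactness of $H(a)$ and Proposition \ref{positivesteadystate}, after a subsequence $\hat g_n\to\bar g$ with $u^*_{\hat g_n}\to u^*_{\bar g}$ uniformly on $\RR$. Because $m\leq H_n'\leq M$ uniformly, for any fixed $(x,s)\in\RR^2$ we have $x<H_n(s)$ for all large $n$; the uniform estimates of Proposition \ref{estimate} then yield a locally $C^{1,2}$ subsequential limit $U$ solving $U_t=U_{xx}+U(\bar g(x)-U)$ on all of $\RR\times\RR$, with $U\leq u^*_{\bar g}$ and $U(0,0)\leq u^*_{\bar g}(0)-\epsilon_0$.

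The matching lower bound $U\geq\epsilon$ comes from Step 1 together with the time-monotonicity of $\tilde u$ from Proposition \ref{prop1} (whose hypothesis $\tilde u<u^*_g$ follows from $\tilde u\leq u^*_g$ and the strong maximum principle, since $\tilde u(\tilde h(t),t)=0<u^*_g(\tilde h(t))$): given $(x,s)$ with $x\leq H_n(s)-\delta$, pick the unique earlier $s_0\leq s$ at which $H_n(s_0)=x+\delta$; monotonicity in $s$ followed by Step 1 gives $U_n(x,s)\geq U_n(x,s_0)\geq\epsilon$. Passing to the limit yields $U\geq\epsilon$ on $\RR\times\RR$. Thus $U$ is an entire solution of the KPP equation sandwiched between $\epsilon$ and $u^*_{\bar g}$; feeding $U(\cdot,s_0)$ as Cauchy data at time $s_0$ into the stability statement of Proposition \ref{positivesteadystate} (whose convergence rate depends only on the sandwich constants) and sending $s_0\to-\infty$ forces $U(\cdot,0)\equiv u^*_{\bar g}$, so $U(0,0)=u^*_{\bar g}(0)$ in contradiction to the earlier displayed inequality. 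Finally, rewriting in moving-frame variables via $u^*_g(\xi+\tau)=u^*_{g(\cdot+\tau)}(\xi)$ gives the ``and hence'' clause. The main obstacle, I expect, is purely the bookkeeping needed to keep every uniform-in-$g$ constant ($m$, $\delta$, $\epsilon$, the Hölder modulus of $u_x$) intact through the two successive limits defining $\tilde u$, and the verification that Proposition \ref{prop1} applies to $\tilde u$ itself, so that the monotonicity exploited in the lower-bound step is licensed; the closing Liouville-type squeeze is a standard consequence of global stability once the uniform sandwich $\epsilon\leq U\leq u^*_{\bar g}$ is in hand.
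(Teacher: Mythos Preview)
Your proposal is correct and follows essentially the same route as the paper: both derive the non-degeneracy near the boundary from the uniform lower bound on $\tilde h'$ combined with a uniform bound on the second derivative (equivalently, a uniform modulus for $\tilde u_x$), then invoke the time-monotonicity of $\tilde u$ from Proposition~\ref{prop1} to propagate the lower bound to all of $\{x\le \tilde h(t)-\delta\}$, and finally run the same translate--compactness--Liouville contradiction to obtain the convergence to $u^*_g$. The only cosmetic difference is that the paper appeals to the uniform bound on $\tilde u_{xx}$ rather than the H\"older modulus of $\tilde u_x$, and states the entire-solution rigidity step (``$\psi>\epsilon$ bounded entire $\Rightarrow\psi\equiv u^*_{\bar g}$'') without spelling out the initial-time $\to-\infty$ squeeze that you make explicit.
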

\begin{proof}

First, note that $\tilde h'(t;g)>0$ uniformly for $t\in \RR$ and $g\in H(a)$ by Lemma \ref{lemma2.4} and that $\tilde u_{xx}(x,t;g)$ is uniformly bounded for $x\in (-\infty,\tilde h(t;g)]$, $t\in \RR$ and $g\in H(a)$. Hence there are some $\epsilon,\delta>0$ such that $\tilde u(\tilde h(t;g)-\delta,t;g)\geq \epsilon$ for $t\in \RR$ and $g\in H(a)$. It is also known that $\tilde u(x,t;g)$ is increasing in $t$ by Proposition \ref{prop1}. Hence, $\tilde u(x,t;g)\geq \epsilon$ for $x\leq \tilde h(t;g)-\delta$,$t\in \RR$ and $g\in H(a)$. This means that $\tilde v (\xi,\tau;g)\geq \epsilon$ for $\xi<-\delta, \tau\in \RR$ and $g\in H(a)$.

Assume, for sake of contradiction, that there are some positive constant C and sequences $\{x_n\}_{n=0}^\infty, \{t_n\}_{n=0}^\infty, \{g_n\}_{n=0}^\infty$ with $x_n+h(t_n)\to -\infty$ as $n\to \infty$ such that $|\tilde u(x_n+\tilde h(t_n),t_n;g_n)-u^*_{g_n}(x_n+h(t_n))|> C$. Without loss of generality, we can suppose that there is some $\bar g\in H(a)$ such that $u^*_{g_n}(x+x_n+h(t_n))\to u^*_{\bar g}(x)$ uniformly for $x\in \RR$ and  $\tilde u(x+x_n+\tilde h(t_n),t+t_n;g_n)\to \psi(x,t)$ locally uniformly for $x,t\in \RR$.  $\psi$ is one bounded entire solution of $u_t=u_{xx}+u(\bar g(x)-u)$ and $\psi(x,t)>\epsilon$ for $x,t\in \RR$.  This means that $\psi(x,t)=u^*_{\bar g}(x)$. On the other hand $\psi(0,0)=\lim_{n\to \infty}\tilde u(x+x_n+\tilde h(t_n),t+t_n;g_n)$. This implies that $|\psi(0,0)-u^*_{\bar g}(0)|\geq C$, a contradiction. The proof is complete.
\end{proof}
Let us go back to the equation \eqref{veq}.  From Lemma \ref{lemma1} and Proposition \ref{prop1}, we have 
\begin{lemma}
Let $v(\xi,\tau)$ be one entire solution of \eqref{veq} generated by $u(x,t)=v(x-h(t), h(t))$ where $(u,h)$ is a positive entire solution of \eqref{main-eq-g} and $u(x,t)\leq u^*_g(x)$. Then $\tilde v(\xi,\tau;g)\geq v(\xi,\tau)$.
\end{lemma}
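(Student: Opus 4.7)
The plan is to view this as a pull-back of Proposition \ref{prop1} into the moving-frame coordinates of Lemma \ref{lemma2}. The only nontrivial point is that Proposition \ref{prop1} presumes the strict inequality $u<u^{*}_g$, whereas here only $u\leq u^{*}_g$ is given, so I first strengthen the hypothesis and then track a pointwise comparison through the $h_{0}\to -\infty$ limit that defines $\tilde v$.

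First I would upgrade $u\leq u^{*}_g$ to strict inequality. On the parabolic interior $\{(x,t):x<h(t),\,t\in\RR\}$ the difference $w=u^{*}_g-u\geq 0$ satisfies
\[
w_t-w_{xx}=w\bigl(g(x)-u-u^{*}_g\bigr),
\]
a linear parabolic equation with bounded coefficient. Since $u(h(t),t)=0<u^{*}_g(h(t))$, $w$ does not vanish identically, so the strong maximum principle forces $w>0$ throughout the interior. Hence Proposition \ref{prop1}, and more importantly its intermediate inequality \eqref{above1}, become available for $(u,h)$.

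Next I would carry out the comparison at finite $h_{0}$. Fix $(\xi_{0},\tau_{0})$ with $\xi_{0}\leq 0$ and $\tau_{0}$ in the range of $h$, and set $t_{2}=h^{-1}(\tau_{0})$, so that $v(\xi_{0},\tau_{0})=u(\xi_{0}+\tau_{0},t_{2})$. For any $h_{0}<\tau_{0}$, the map $t\mapsto \bar h(t;g,h_{0})$ starts at $h_{0}$ and is strictly increasing to $+\infty$ thanks to the uniform lower bound obtained from (the $n\to\infty$ limit of) Lemma \ref{lemma2.4}, so there is a unique $t_{1}(h_{0})>0$ with $\bar h(t_{1}(h_{0});g,h_{0})=\tau_{0}$. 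Applying \eqref{above1} with this choice and evaluating at $x=\xi_{0}+\tau_{0}\leq \tau_{0}$ yields
\[
\bar u\bigl(t_{1}(h_{0}),\,\xi_{0}+\tau_{0};\,g,h_{0}\bigr)\geq u(\xi_{0}+\tau_{0},t_{2}),
\]
which by the change of variables of Lemma \ref{lemma2} is exactly $v(\xi_{0},\tau_{0};g,h_{0})\geq v(\xi_{0},\tau_{0})$.

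Finally I would let $h_{0}=h_{0}^{n}\to -\infty$ along the sequence used to define $\tilde v$. Since $\tilde h(\cdot;g)$ is strictly increasing and unbounded in both directions, there is a unique $t_{0}\in\RR$ with $\tilde h(t_{0};g)=\tau_{0}$, and by construction $\bar h(t_{1}(h_{0}^{n});g,h_{0}^{n})\equiv \tilde h(t_{0};g)$. Lemma \ref{lemma8} then delivers $\bar u(t_{1}(h_{0}^{n}),\cdot;g,h_{0}^{n})\to \tilde u(\cdot,t_{0};g)$ locally uniformly, i.e.\ $v(\xi_{0},\tau_{0};g,h_{0}^{n})\to \tilde v(\xi_{0},\tau_{0};g)$, and passing to the limit in the previous inequality gives the claim. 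The main delicate point is precisely this alignment: pinning the moving boundary $\bar h(t_{1}(h_{0}^{n});g,h_{0}^{n})$ to the prescribed horizontal level $\tau_{0}$ is what lets Lemma \ref{lemma8} identify the limit of the translated profiles with $\tilde v(\cdot,\tau_{0};g)$; without it the profiles would drift and the limit could not be read off cleanly.
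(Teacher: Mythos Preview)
Your proof is correct and is essentially the paper's own route: the lemma is stated there as a direct consequence of Lemma \ref{lemma1} and Proposition \ref{prop1}, read through the moving-frame change of variables of Lemma \ref{lemma2}. Your pass through the intermediate inequality \eqref{above1} and the limit via Lemma \ref{lemma8} just re-runs the last step of the proof of Proposition \ref{prop1}, so once you have upgraded $u\le u^*_g$ to $u<u^*_g$ you could equally well cite that proposition directly and set $h(t_2)=\tilde h(t_1;g)=\tau$; your explicit strong-maximum-principle argument for the strict inequality is a useful clarification that the paper leaves implicit.
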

Define $\rho(\tau, v_1,v_2;g)=\inf_{\xi<0}\frac{v_1(\xi,\tau)}{v_2(\xi,\tau)}$, where $v_1,v_2$ are two entire solutions of \eqref{veq}. 
We have 
\begin{lemma} \label{lemma10}Let $v_1,v_2$ be two entire solutions of \eqref{veq}.  Suppose that $0<v_1(\xi,\tau)\leq v_2(\xi,\tau) \leq u^*_g(\xi+\tau)$ and $-(v_2)_\xi+(v_2)_\tau\geq 0$  for $\xi<0$, $\tau\in \mathbb R$ and that $(v_2)_\xi(0,\tau)<0$, $0<\rho(\tau, v_1, v_2;g)<1$ for $\tau\in \RR$.  Then $\rho(\tau, v_1, v_2;g)$ is increasing in $\tau$. In addition, if $\liminf_{\xi\to\infty}\frac{v_1(\xi,\tau)}{v_2(\xi,\tau)}>\rho(\tau, v_1, v_2;g)$ at some $\tau_0$, then $\rho(\tau, v_1, v_2;g)>\rho(\tau_0, v_1, v_2;g)$ for any $\tau>\tau_0$.
\end{lemma}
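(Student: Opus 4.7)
The plan is to derive a linear parabolic inequality for the sliding difference $w:=v_1-\rho_0 v_2$, where $\rho_0:=\rho(\tau_0,v_1,v_2;g)\in(0,1)$, and then invoke the weak and strong maximum principles in turn. Setting $c_i(\tau):=v_{i,\xi}(0,\tau)$ so that each $v_i$ satisfies $v_{i,\xi\xi}+v_i(g-v_i)+c_i(v_{i,\tau}-v_{i,\xi})=0$, I would subtract $\rho_0$ times the $v_2$-equation from the $v_1$-equation and re-express everything via $v_1=w+\rho_0 v_2$ to obtain
\[
 w_{\xi\xi}+c_1(w_\tau-w_\xi)+\bigl(g(\xi+\tau)-v_1-\rho_0 v_2\bigr)w=-\rho_0(1-\rho_0)v_2^2-\rho_0(c_1-c_2)(v_{2,\tau}-v_{2,\xi}).
\]
Since $v_1$ and $v_2$ both vanish at $\xi=0$ with $v_1\le v_2$, the Hopf lemma forces $0>v_{1,\xi}(0,\tau)\ge v_{2,\xi}(0,\tau)$, i.e.\ $0>c_1\ge c_2$; combined with $-(v_2)_\xi+(v_2)_\tau\ge 0$ this makes the second term on the right-hand side nonpositive, while $-\rho_0(1-\rho_0)v_2^2<0$ strictly. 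Dividing through by $|c_1|>0$ presents $w$ as a bounded strict supersolution of a linear uniformly parabolic operator on $(-\infty,0)\times(\tau_0,\infty)$.

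For the monotonicity claim, the definition of $\rho_0$ gives $w(\cdot,\tau_0)\ge 0$, and the boundary condition gives $w(0,\tau)\equiv 0$. After conjugating by $e^{-A\tau}$ with $A$ large enough to render the zeroth-order coefficient nonnegative, the maximum principle for bounded parabolic supersolutions on unbounded spatial domains (valid since $v_1,v_2$ are bounded) yields $w\ge 0$ on $(-\infty,0]\times[\tau_0,\infty)$. Thus $v_1(\xi,\tau)/v_2(\xi,\tau)\ge\rho_0$ for every $\xi<0$ and $\tau\ge\tau_0$, i.e.\ $\rho(\tau,v_1,v_2;g)\ge\rho_0$ for $\tau\ge\tau_0$.

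For the strict-increase assertion under $\liminf_{\xi\to-\infty}v_1(\xi,\tau_0)/v_2(\xi,\tau_0)>\rho_0$, the strong maximum principle applied to the strict supersolution $w$ rules out interior zeros for $\tau>\tau_0$ (such a zero would force $w\equiv 0$ on the connected parabolic component, contradicting the strict negativity of the source term), so $w>0$ pointwise on $(-\infty,0)\times(\tau_0,\infty)$; the Hopf boundary-point lemma at $\xi=0$ then yields $w_\xi(0,\tau)<0$, equivalent by L'H\^opital to $\lim_{\xi\to 0^-}v_1/v_2>\rho_0$. To upgrade these pointwise statements to the uniform gap $\inf_{\xi<0}v_1(\xi,\tau)/v_2(\xi,\tau)>\rho_0$, I would fix $\lambda\in\bigl(\rho_0,\liminf_{\xi\to-\infty}v_1(\xi,\tau_0)/v_2(\xi,\tau_0)\bigr)$ and use translations $\xi\mapsto\xi+\xi_n$ with $\xi_n\to-\infty$: parabolic compactness produces bounded positive entire solutions of a limiting KPP equation on $\RR^2$, and the attractor statement in Proposition~\ref{positivesteadystate} forces the limiting ratio to equal $1>\rho_0$; combined with continuity on compact pieces of $[-\infty,0]$ this gives the uniform gap, hence $\rho(\tau)>\rho_0$ for $\tau>\tau_0$. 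The step I expect to be most delicate is this tail analysis at $\xi\to-\infty$: one has to verify that the limiting entire solutions are genuinely bounded away from zero so Proposition~\ref{positivesteadystate} applies, and that the liminf hypothesis propagates forward in time under the evolution.
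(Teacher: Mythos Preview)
Your monotonicity argument is a repackaging of the paper's: the paper sets $w=\rho_0 v_2$ and shows it is a subsolution of the linear equation \eqref{phieq} that $v_1$ satisfies, whereas you set $w=v_1-\rho_0 v_2$ and exhibit it as a supersolution with a sign-controlled source; the two are equivalent, and the key inequalities $c_1\ge c_2$ and $-(v_2)_\xi+(v_2)_\tau\ge0$ are used in exactly the same way. Your version has the minor advantage that the strict source term $-\rho_0(1-\rho_0)v_2^2<0$ makes the strong maximum principle step cleaner.

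For the strict-increase clause the paper proceeds much more briefly than your compactness program: it simply asserts that the hypothesis $\liminf_{\xi\to-\infty}v_1/v_2>\rho_0$ persists for small $\tau>\tau_0$, observes that $w(\cdot,\tau_0)\not\equiv v_1(\cdot,\tau_0)$, and then cites the strong maximum principle to get $\rho(\tau)>\rho_0$ for small (hence, by monotonicity, all) $\tau>\tau_0$. Your worry about the tail is well placed---neither the paper's persistence claim nor your proposed route through Proposition~\ref{positivesteadystate} is fully justified under the bare hypotheses of the lemma, since no uniform positive lower bound on $v_1,v_2$ as $\xi\to-\infty$ is assumed. In every application the paper makes of this lemma, however, $v_2=\tilde v$ (and likewise $v_1$) satisfies Lemma~\ref{lemma2.9}, so $v_1/v_2\to 1$ as $\xi\to-\infty$ uniformly in $\tau$ and the tail issue disappears; one should regard this as an implicit extra hypothesis. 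With that addition your argument goes through, and is in fact more detailed than the paper's at this point.
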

\begin{proof}
Without loss of generaility, we only prove that $\rho(0, v_1, v_2;g)\leq \rho(\tau, v_1,v_2;g)$ for any $\tau>0$. Let $\rho_0=\rho(0, v_1, v_2;g)$. Then $\rho_0 v_2(\xi,0;g)\leq v_1(\xi,0), \forall \xi\leq 0$. Let $w=\rho_0 v_2$. Then
  \begin{equation}
\label{weq}
\begin{cases}
- (v_2)_\xi(0,\tau)(-w_\xi+w_\tau)=w_{\xi\xi}+w(g(\xi+\tau)- v_2),\quad -\infty<\xi<0,\tau>0\cr u(0,\tau)=0,\tau>0.
\end{cases}
\end{equation} From the assumption, we know that 
  $-(v_2)_\xi(0,\tau)(-w_\xi+w_\tau)\geq 0$ and $-(v_2)_\xi(0,\tau)\geq -(v_1)_\xi (0,\tau)>0$. Hence,
   \begin{equation}
\label{w1eq}
-(v_1)_\xi(0,\tau)(-w_\xi+w_\tau)\leq w_{\xi\xi}+w(g(\xi+\tau)-v_1),\quad -\infty<\xi<0,\tau>0.
\end{equation}  
This means that $w$ is one lower solution of  the following equation of $\phi$ \begin{equation}
\label{phieq}
\begin{cases}
- (v_1)_\xi(0,\tau)(-\phi_\xi+\phi_\tau)=\phi_{\xi\xi}+\phi(g(\xi+\tau)- v_1),\quad -\infty<\xi<0,\tau>0\cr u(0,\tau)=0,\tau>0.
\end{cases}
\end{equation} 
We also know that $v_1$ is one solution of \eqref{phieq} and $w(\xi,0)\leq v_1(\xi,0),\forall \xi\leq 0 $.  Therefore, it holds that $w(\xi,\tau)\leq v_1(\xi,\tau),\forall \xi< 0,\tau>0 $. 
In addition, if $\liminf_{\xi\to\infty}\frac{v_1(\xi,0)}{v_2(\xi,0)}>\rho(0, v_1, v_2;g)$, then $\liminf_{\xi\to\infty}\frac{v_1(\xi,\tau)}{v_2(\xi,\tau)}>\rho(0, v_1, v_2;g)$ for small $\tau>0$ and $w(\xi,0)\leq v_1(\xi,0)$ at some$\xi\leq 0 $. Hence,  $\rho(\tau, v_1, v_2;g)>\rho(0, v_1, v_2;g)$ for any $\tau>0$. The proof is complete.
\end{proof}
Before to prove the Theorem \ref{maintheorem}, we show one simple Lemma:
\begin{lemma}\label{simplelemma}
Let $\{v_n\}_{n=1}^\infty$ be a sequence of positive continuous functions on an interval $I$, $\rho_n:=\inf_{x\in I}{v_n(x)}$.  If $v_n(x)\to v(x)$ as $n\to \infty$ locally uniformly for $x\in I$,  then  $\rho:=\inf_{x\in I}{v(x)}\geq \limsup_{n\to \infty}\rho_n$.  If $v_n(x)\to v(x)$ as $n\to \infty$ locally uniformly for $x\in I$,  then $\rho=\lim_{n\to \infty}\rho_n$. 
\end{lemma}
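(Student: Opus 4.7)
The plan is to handle the two assertions separately, since the first requires only pointwise information while the second needs the stronger uniformity hypothesis acting globally on all of $I$.

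First I would establish $\rho\ge\limsup_n\rho_n$ from pointwise convergence alone. Fix any $x_0\in I$. Since $v_n(x_0)\ge\rho_n$ for every $n$, passing to the limit gives $v(x_0)=\lim_n v_n(x_0)\ge\limsup_n\rho_n$. Taking the infimum over $x_0\in I$ then yields $\rho=\inf_{x_0\in I}v(x_0)\ge\limsup_n\rho_n$. Note that locally uniform convergence is more than enough here; pointwise convergence already suffices.

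For the reverse inequality I would use the uniformity hypothesis directly. For arbitrary $\epsilon>0$, uniform convergence produces $N$ such that $v_n(x)\le v(x)+\epsilon$ for all $x\in I$ and all $n\ge N$. Taking the infimum in $x$ gives $\rho_n\le\rho+\epsilon$ for $n\ge N$, so $\limsup_n\rho_n\le\rho+\epsilon$; letting $\epsilon\to 0$ yields $\limsup_n\rho_n\le\rho$. Combined with the first part this gives $\rho=\lim_n\rho_n$.

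There is no genuine obstacle; the only subtlety worth flagging is the gap between locally uniform and uniform convergence. On an unbounded $I$, locally uniform convergence alone is insufficient for the second conclusion, because a moving dip in $v_n$ can escape to infinity and keep $\rho_n$ strictly below $\rho$. Thus the hypothesis in the second clause must be read as uniform convergence on all of $I$, which is clearly what is intended in view of how the lemma will be applied (to quantities like $v_1/v_2$ on $(-\infty,0)$ that behave uniformly up to the ends by virtue of Lemma \ref{lemma2.9}).
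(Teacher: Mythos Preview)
The paper does not supply a proof for this lemma (it is introduced as a ``simple Lemma'' and then used immediately), so there is no paper argument to compare against. Your overall plan is the natural one: the first paragraph, establishing $\rho\ge\limsup_n\rho_n$ from pointwise convergence, is correct, and your closing remark on the distinction between locally uniform and uniform convergence is well taken and matches how the lemma is actually applied downstream.

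However, your second paragraph has the inequality pointing the wrong way. From uniform convergence you selected the half $v_n(x)\le v(x)+\epsilon$, which yields $\rho_n\le\rho+\epsilon$ and hence $\limsup_n\rho_n\le\rho$. But this is precisely the inequality already obtained in the first part, so combining them gives nothing new and you cannot conclude $\rho=\lim_n\rho_n$. What is needed is the \emph{other} half of $|v_n(x)-v(x)|<\epsilon$, namely $v_n(x)\ge v(x)-\epsilon$ for all $x\in I$ and $n\ge N$; taking infima gives $\rho_n\ge\rho-\epsilon$, hence $\liminf_n\rho_n\ge\rho$. Combined with the first part one then has $\rho\ge\limsup_n\rho_n\ge\liminf_n\rho_n\ge\rho$, so the limit exists and equals $\rho$. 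The repair is a one-symbol change, but as written the argument does not close.
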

It is the time for us to prove our main theorem.
\begin{proof}[Proof of Theorem \ref{maintheorem}] We consider the general $g\in H(a)$ and first show that $(\tilde u(x,t;g), \tilde h(t;g))$ is an almost periodic semi-wave of \eqref{main-eq-g}.  In fact,  here we will show  that $\tilde v (\xi,\tau;g_n)\to \tilde v (\xi,\tau;g)$ locally uniformly for $(\xi,\tau)\in (-\infty,0]\times \RR$ as $n\to +\infty$ provided $g_n\to g$ in $H(a)$. 

Choosing one subsequence, we have that $v(\xi,\tau;g)=\lim_{n\to \infty}\tilde v (\xi,\tau;g_n)$ locally uniformly for $\xi\in (-\infty,0], \tau\in \RR$, and  $v=v(\xi,\tau;g)$ is one entire solution of \eqref{veq}. 

 We claim that the convergence here is not only locally uniform but also uniform for $\xi\leq0$ and $\tau \in [-M,M]$ where $M$ can be any positive constant. In fact, $|\tilde v(\xi,\tau;g_n)-u^*_{g_n(\cdot+\tau)}(\xi)|\to 0$ as $\xi\to -\infty$ uniformly for $\tau\in \RR$ and $n=1,2,\cdots$ by Lemma \ref{lemma2.9}. That is for any $\epsilon>0$, there is some $S>0$ such that $|\tilde v(\xi,\tau;g_n)-u^*_{g_n(\cdot+\tau)}(\xi)|<\epsilon$ for  $\xi<-S,\tau\in \RR$ and $n=1,2,\cdots$. Moreover, there is some integer $N$ such that for any $\tau,\xi\in \RR$ 
 $|u^*_{g_n(\cdot+\tau)}(\xi)-u^*_{g(\cdot+\tau)}(\xi)|<\epsilon, n>N$ since $\{u^*_g\}_{g\in H(a)}$ is a one-cover of $H(a)$. $|\tilde v(\xi,\tau;g_n)-\tilde v(\xi,\tau;g_{n'})|<4\epsilon$ for $\xi<-S,\tau\in \RR$ and $n,n'>N$. Then $|\tilde v(\xi,\tau;g_n)- v(\xi,\tau;g)|<4\epsilon$ for $\xi<-S,\tau\in \RR$ and $n>N$.  We also know $\tilde v (\xi,\tau;g_n)\to  v (\xi,\tau;g)$ uniformly for $\xi\in [-S,0], \tau\in [-M,M]$. Therefore, our claim holds. Moreover,  $\tilde v (\xi,\tau;g_n)\geq\epsilon$ for $\tau\leq\delta$ by Lemma \ref{lemma2.9} and hence $v (\xi,\tau;g)\geq\epsilon$ for $\tau\leq\delta$.  Repeating the proof of Lemma \ref{lemma2.9},   $|v (\xi,\tau;g)-u^*_{g(\cdot+\tau)}(\xi)|\to 0$ as $\xi\to -\infty$ uniformly for $\tau\in \RR$.

By Lemma \ref{lemma2.4},  $\tilde v_\xi (\xi,\tau;g_n)<0$ uniformly for $\tau\in\RR, n=1,2,\cdots$. It follows that $ v_\xi(0,\tau;g)$ and $\tilde v_\xi (0,\tau;g)$ are less than $0$ uniformly for $\tau\in \RR$.  Assume, for the sake of contradiction, that $v\not =\tilde v$.  Then for any $\tau\in \RR$, $\xi<0$,
$v(\xi,\tau;g)<\tilde v(\xi,\tau;g)$ by Proposition \ref{prop1}. Applying Proposition \ref{prop1} again, we also know that $\tilde u(t,x;g)$ is increasing in $t$. This implies that $-\tilde v_\xi+\tilde v_\tau\geq 0$. Lemma \ref{lemma10} yields that $\rho(\tau, v,\tilde v;g)$ is increasing in $\tau$.  It is also holds that $\rho(\tau, v,\tilde v;g)>0$ uniformly for $\tau\in \RR$ since both $v_\xi(0,\tau)< 0$ and $\frac{v(\xi,\tau)}{\tilde v(\xi \tau)}\to 1$ as $\xi\to -\infty$ are uniform for $\tau\in \RR$. Hence, $0<\bar \rho:=\rho(-\infty, v,\tilde v;g)<1$.

By choosing a sequence $\{\tau_n\}_{n=1}^\infty, \tau_n\to +\infty$ as $n\to \infty$, we have $g(\cdot-\tau_n)\to \bar g$ and two entire solutions $v_1(\xi,\tau;\bar g)$ and $v_2(\xi,\tau;\bar g)$ of \eqref{veq} with $g$ replaced by $\bar g$, and $v_1(\xi,\tau;\bar g)=\lim_{n\to \infty} v(\xi,\tau-\tau_n;g)=\lim_{n\to \infty}  v(\xi ,\tau;g(\cdot-\tau_n))$, $v_2(\xi,\tau;\bar g)=\lim_{n\to \infty}\tilde v(\xi, \tau-\tau_n;g)=\lim_{n\to \infty}\tilde v(\xi ,\tau;g(\cdot-\tau_n))$. Again,  the convergence is uniform for $\xi<0,\tau\in [-M,M]$, $M$ can be any positive constant and $|v_1(\xi,\tau;\bar g)-u^*_{\bar g(\cdot+\tau)}(\xi)|, |v_2(\xi,\tau;\bar g)-u^*_{\bar g(\cdot+\tau)}(\xi)|\to 0$ as $\xi\to -\infty$ uniformly for $\tau\in \RR$.  It follows Lemma \ref{simplelemma} that  $\rho(\tau, v_1,v_2;\bar g)\equiv \bar \rho$ for $\tau\in \RR$. On the other hand,  since $-\tilde v_\xi+\tilde v_\tau\geq 0$ for $\tau\in \RR$, we also have $-(v_2)_\xi+(v_2)_\tau\geq 0$ for $\tau\in \RR$.  Lemma \ref{lemma10} show that $\rho(\tau, v_1,v_2;\bar g)$ is strictly increasing in $\tau$, a contradiction.
Hence $(\tilde u(x,t;g), \tilde h(t;g))$ is one almost periodic semi-wave. We also finish the proof of the conclusions (1a) and (1b).

To prove the conclusion (1c), notice that $\tilde h(t)\to +\infty$ as $t\to +\infty$, $\tilde u(x,t;g)$  is increasing in $t$ and solves the equation $u_t=u_{xx}+u(g(x)-u)$ for $x<\tilde h(t;g)$. 
Let $\phi(x)=\lim_{t\to +\infty} \tilde u(x,t;g)$ locally uniformly for $x\in \RR$.   Then $u(t,x)\equiv \phi(x)$ also solves  $u_t=u_{xx}+u(g(x)-u)$. It also holds that $\phi(x)>0$ uniformly for $x\in \RR$. It yields that $\phi=u^*_g$.  Since $|\tilde u(y,t;g) -u^*_g (y)|\to 0$ as $y\to -\infty$ for any fixed $t$, we have $\phi(x)=\lim_{t\to +\infty} \tilde u(x,t;g)$ uniformly for $x\in(-\infty,M] $ for any constant M.   

Now we prove the conclusion (2) about the uniqueness of the almost periodic semi-wave connecting $u^*$ and $0$. Assume, for sake of contradiction, that there is another $(\tilde u_1,\tilde h_1)$  almost periodic semi-wave connecting $u^*$ and $0$ where $\tilde u_1$ is not one time translation of $\tilde u$. Then  $\tilde v_1(\xi, \tau):=\tilde u_1(\xi+(\tilde h_1)^{-1}(\tau), (\tilde h_1)^{-1}(\tau))$ is a entire solution of \eqref{veq} and there are some $\xi_0,\tau_0$ such that $\tilde v_1(\xi_0, \tau_0)<\tilde v(\xi_0, \tau_0)$. To be simple, let $\tau_0=0$. Let $\rho_0=\rho(0,\tilde v_1,\tilde v;g)$. Then $\rho_0<1$. We also have $\frac {\tilde v_1(\xi, 0)}{\tilde v(\xi, 0)}\to 1>\rho_0$  as $\xi\to -\infty$ since both of $(\tilde u,\tilde h)$ and $(\tilde u_1,\tilde h_1)$ connect $u^*$ and $0$.  Lemma \ref{lemma10} shows  $\rho(\tau,\tilde v_1,\tilde v;g)>\rho_0$ for $\tau>0$. The almost periodic property means that there is some sequence $\{\tau_n\}_{n=1}^\infty$ such that $\tilde v(\xi,\tau_n)\to \tilde v(\xi,0)$ and $\tilde v_1(\xi,\tau_n)\to \tilde v_1(\xi,0)$ locally uniformly for $\xi\leq 0$. Applying Lemma \ref{simplelemma}, we have $\frac {\tilde v_1(\xi, 0)}{\tilde v_1(\xi, 0)}\geq  \rho(\tau_n,\tilde v_1,\tilde v;g)>\rho_0$. It is a contradiction. The proof of the uniqueness is complete. 

To prove the conclusion (3), we first notice that if $(u,h)$ is one entire solution and there are some $\epsilon,\delta>0$ such that $u(x,t)>\epsilon$ for any $x\in (-\infty,h(t)-\delta], t\in \RR$,  then $h'(t)>0$ uniformly for $\tau\in \RR$. Moreover, similar as in the proof of the almost periodic property of $\tilde u$,    $| u(t,x+h(t))- u^*(x+h(t))|\to 0$ as $x+h(t)\to -\infty$ uniformly for $t\in\RR$.  Let  $v(\xi, \tau):=u(\xi+h^{-1}(\tau), h^{-1}(\tau))$. $\rho(\tau, v,\tilde v;g)$ is increasing in $\tau$.  The following proof is similar as in the above part about the almost periodic property of $\tilde v$. 
\end{proof}



Finally, we have
\begin{corollary}
Let $(\tilde u(x,t;g), \tilde h'(t;g))$ be the sem-iwave solution of \eqref{main-eq-g}. Then the instantaneous speed $\tilde h'(t;g)$ is an almost periodic function of   $\tilde h(t;g)$ .
Mathematically, $\tilde h'(t;g)=f(\tilde h(t;g)):=\tilde u_x(\tilde h(t;g), t;g)$ with $f $ being  almost periodic. 
Moreover, $c:=\lim_{|s-r|\to \infty}\frac{|s-r|}{|\int_r^s(1/f(\tau))d\tau}$ is the average speed.
\end{corollary}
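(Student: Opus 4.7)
The plan is to identify $\tilde h'(t;g)$ as a composition of an almost periodic orbit in $H(a)$ with a continuous scalar functional on $H(a)$, then invert the resulting autonomous-in-position ODE $\tilde h' = f(\tilde h)$ to read off the average speed as the reciprocal of the mean of $1/f$.

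First I would use the semi-wave representation $\tilde u(x,t;g) = v(x-\tilde h(t;g),\tilde h(t;g);g)$ together with the free-boundary condition to compute
$$\tilde h'(t;g) = -\mu \tilde u_x(\tilde h(t;g),t;g) = -\mu\, v_\xi(0,\tilde h(t;g);g).$$
Setting $f(\tau) := -\mu\, v_\xi(0,\tau;g)$ gives $\tilde h'(t;g) = f(\tilde h(t;g))$, so the instantaneous speed depends on $t$ only through the current position $\tilde h(t;g)$.

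Next I would show $f$ is almost periodic. By the translation identity $v(\xi,\tau+s;g) = v(\xi,\tau;g(\cdot+s))$ from Definition \ref{traveling-wave2} (verified in the proof of Theorem \ref{maintheorem}), $f(\tau) = F(g(\cdot+\tau))$ where $F:H(a)\to \RR$ is defined by $F(h) := -\mu\, v_\xi(0,0;h)$. The orbit map $\tau\mapsto g(\cdot+\tau)$ is almost periodic as an $H(a)$-valued map by Bochner's criterion together with compactness of $H(a)$, so it suffices to prove that $F$ is continuous. The one-cover property gives continuity of $h\mapsto v_h$ only in $L^\infty_{local}((-\infty,0])\cap C((-\infty,0])$, so the main obstacle here is upgrading to continuity of the boundary derivative at $\xi=0$. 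For this, Lemma \ref{lemma2.4} supplies uniform positive upper and lower bounds on $\tilde h'(t;g)$, which via $-v_\xi(0,\tau;h) = \tilde h'/\mu$ give uniform two-sided control on the transport coefficient in the pulled-back equation \eqref{veq}; combined with the uniform $L^\infty$ bound on $v$ and on the reaction term, parabolic Schauder estimates up to the fixed boundary $\xi=0$ yield uniform $C^{1+\alpha/2,\,2+\alpha}$ bounds on $\{v_h: h\in H(a)\}$ near $\xi=0$. Thus pointwise convergence in the one-cover upgrades to $C^1$ convergence at $\xi=0$, so $F$ is continuous and $f = F\circ(\tau\mapsto g(\cdot+\tau))$ is almost periodic as claimed.

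Finally, since $\inf_\tau f(\tau) > 0$ by Lemma \ref{lemma2.4}, the map $\tilde h(\cdot;g)$ is a strictly increasing $C^1$-bijection of $\RR$, and its inverse $T := \tilde h^{-1}(\cdot;g)$ satisfies $T'(\tau) = 1/f(\tau)$. Integrating,
$$T(s) - T(r) = \int_r^s \frac{d\tau}{f(\tau)}, \qquad \frac{|s-r|}{|T(s)-T(r)|} = \frac{|s-r|}{\left|\int_r^s (1/f(\tau))\,d\tau\right|},$$
so the left-hand side is literally the average speed of the semi-wave between positions $r$ and $s$. Because $f$ is almost periodic and bounded away from $0$, the reciprocal $1/f$ is almost periodic as well, so its mean $M(1/f) := \lim_{L\to\infty} L^{-1}\int_0^L (1/f(\tau))\,d\tau$ exists, and the uniform mean value property for almost periodic functions gives $(s-r)^{-1}\int_r^s (1/f(\tau))\,d\tau \to M(1/f)$ as $|s-r|\to\infty$, uniformly in $r$. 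Hence the limit $c$ in the corollary exists and equals $1/M(1/f)$, the asymptotic average propagation speed of the semi-wave.
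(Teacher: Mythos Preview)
The paper states this corollary without proof, presenting it as a direct consequence of the main theorem; there is no argument in the paper to compare against. Your proof is correct and supplies exactly the details one would expect: the identity $\tilde h'(t;g)=-\mu\,v_\xi(0,\tilde h(t;g);g)$ from the free-boundary condition defines $f$; the translation identity of Definition~\ref{traveling-wave2} writes $f(\tau)=F(g(\cdot+\tau))$ for a scalar functional $F$ on $H(a)$; and the uniform two-sided bounds on $-v_\xi(0,\tau)$ from Lemma~\ref{lemma2.4} feed into Schauder estimates for the uniformly parabolic equation~\eqref{veq} to upgrade the one-cover continuity of $v$ to $C^1$ convergence at $\xi=0$, making $F$ continuous and hence $f$ almost periodic. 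The final step, reading off the average speed as the reciprocal of the mean of the almost periodic function $1/f$ via $T=\tilde h^{-1}$, is the standard computation. One small remark: the corollary as printed omits the factor $-\mu$ in the definition of $f$, which you correctly restore.
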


\end{document}